\documentclass[a4paper, 11pt]{article}
\usepackage[scale=0.8,centering]{geometry}
\usepackage{amsmath,amssymb,amsthm,graphicx,bbm,pdfpages}
\usepackage{color,CJK,mathrsfs}
\usepackage{hyperref}
\usepackage{extarrows}
\usepackage[font={footnotesize}]{caption}

\usepackage[T1]{fontenc}

\usepackage{enumitem}
\setitemize{itemsep=1pt,topsep=2pt,parsep=1pt,partopsep=1pt}

\numberwithin{equation}{section}
\newtheorem{theorem}{\textbf{Theorem}}[section]

\newtheorem{lemma}[theorem]{\textbf{Lemma}}

\newtheorem{remark}{\textbf{Remark}}[section]


\newcommand{\bbE}{{\ensuremath{\mathbbm E}} }

\newcommand{\bbN}{{\ensuremath{\mathbbm N}} }

\newcommand{\bbP}{{\ensuremath{\mathbbm P}} }

\newcommand{\bbR}{{\ensuremath{\mathbbm R}} }

\newcommand{\bbT}{{\ensuremath{\mathbbm T}} }

\newcommand{\bbV}{{\ensuremath{\mathbbm V}} }

\newcommand{\bbZ}{{\ensuremath{\mathbbm Z}} }
\newcommand{\var}{\bbV\!\mbox{ar}}


\newcommand{\cA}{{\ensuremath{\mathcal A}} }

\newcommand{\cC}{{\ensuremath{\mathcal C}} }

\newcommand{\cZ}{{\ensuremath{\mathcal Z}} }


\newcommand{\bE}{{\ensuremath{\mathbf E}} }

\newcommand{\bP}{{\ensuremath{\mathbf P}} }

\newcommand{\bT}{{\ensuremath{\mathbf T}} }

\newcommand{\bZ}{{\ensuremath{\mathbf Z}} }

\newcommand{\be}{\begin{equation}}
\newcommand{\ee}{\end{equation}}
\newcommand{\bee}{\begin{equation*}}
\newcommand{\eee}{\end{equation*}}
\newcommand{\bc}{\be\begin{array}{r@{\,}c@{\,}l}}
\newcommand{\ba}{\begin{array}}
\newcommand{\ea}{\end{array}}
\newcommand{\ec}{\end{array}\ee}
\newcommand{\dis}{\displaystyle}

\newcommand{\dd}{\mbox{d}}

\newcommand{\di}{{\rm d}}

\definecolor{rosso}{RGB}{206,43,55}
\definecolor{blu}{RGB}{140,204,171}
\definecolor{verde}{RGB}{0,146,70}
\definecolor{arancione}{RGB}{255,102,51}
\definecolor{viola}{RGB}{255,0,255}

\title{Weak coupling limits for directed polymers in tube environments}

%
%
%
%

\author{Ran Wei 
\footnote{Department of Mathematics, Nanjing University, Nanjing 210093, China. Email: weiran@nju.edu.cn}
\and Jinjiong Yu 
\footnote{KLATASDS-MOE, School of Statistics, East China Normal University, 3663 North Zhongshan Road, Shanghai 200062, China.
	Email: jjyu@sfs.ecnu.edu.cn}
\footnote{NYU-ECNU Institute of Mathematical Sciences at NYU Shanghai, Shanghai, 200062, China}
}

\begin{document}
\maketitle
\begin{abstract}
In this paper, we study a model of directed polymers in random environment, where the environment is restricted to a time-space tube whose spatial width grows polynomially with time.
It can be viewed as an interpolation between the disordered pinning model and the classic directed polymer model.
We prove weak coupling limits for the directed polymer partition functions in such tube environments in all dimensions, as the inverse temperature vanishes at a suitable rate.
As the tube width varies, transitions between regimes of disorder irrelevance, marginal relevance and disorder relevance are observed.
\end{abstract}

\vspace{.5cm}

\noindent 
2010 \textit{Mathematics Subject Classification.} Primary 82B44; secondary 60K35, 82D60.\newline
{\it Keywords.} Directed polymer in tube environments, weak coupling limit, disorder relevance, marginal relevance.

\section{Introduction}
\subsection{The directed polymer model}
In this paper, we study a model of \textit{directed polymers in random environment}. This model investigates the behavior of \textit{polymer chains} when they are stretched in some solvent with charges or impurities (called \textit{random environment}). The configurations of polymer chains are influenced by the interaction between polymers and environment. Whether this interaction changes qualitatively the behavior of polymer chains is an important question in the study of the directed polymer model. When the behavior of the model is changed qualitatively as the strength of interaction with the random environment increases, the system undergoes a phase transition, which is an important and interesting phenomenon in statistical physics.

The directed polymer model was first introduced by Huse and Henley in \cite{HH85} to study the domain walls in Ising systems. The first mathematical study on this model was done by Imbrie and Spencer in \cite{IS88}. Thereafter, the model became increasingly popular in mathematical physics community and attracted both probabilists and physicists. During the last thirty years, a large number of studies have been carried out. Readers may refer to \cite{C17} and references therein for more information on the directed polymer model.

For the classic directed polymer model, the random environment consists of i.i.d.\ random variables at each time-space lattice point. There is a related model, called the \textit{disordered pinning model}, where the random environment consists of i.i.d.\ random variables on the time-space line $\bbN\times\{0\}$. We consider a variant of the directed polymer model, where the random environment is restricted to a time-space tube, which interpolates between the disordered pinning model and the classic directed polymer model. The motivation is to investigate how the properties of the model change during transitions from the pinning model to the classic directed polymer model (see Subsection \ref{S12}).

We now introduce our model. Let $S:=(S_n)_{n\geq0}$ be a simple symmetric random walk on $\bbZ^d$ with $S_0=0$, representing the polymer chain. The law and expectation of $S$ are denoted by $\bP$ and $\bE$ respectively. The random environment (also called \textit{disorder}) is encoded by i.i.d.\ random variables $\omega:=(\omega_z)_{z\in\bbN\times\bbZ^d}$ indexed by time-space lattice sites. The law and expectation of $\omega$ are denoted by $\bbP$ and $\bbE$ respectively. We assume that $\omega$ has some finite exponential moments and denote its logarithmic moment generating function by
\begin{equation}\label{lambda}
\lambda(\beta):=\log\bbE[\exp(\beta\omega_z)]<\infty,\quad\forall~|\beta|<\beta_0~\text{for some}~\beta_0>0.
\end{equation}
Without loss of generality and for computational simplicity, we further assume that
\begin{equation}\label{Evarw}
\bbE[\omega_z]=0,\quad\bbE[\omega^2_z]=1.
\end{equation}

Our directed polymer model, up to time $N$, is then defined via a Gibbs transform
\begin{equation}\label{dp}
\frac{\dd\bP_{N,\beta}^{\omega}}{\dd\bP}(S):=\frac{1}{Z_{N,\beta}^{\omega}}\exp\left(\sum\limits_{n=1}^{N}(\beta\omega_{n,S_n}-\lambda(\beta))\mathbbm{1}_{\{(n,S_n)\in\Omega_N\}}\right),
\end{equation}
where $\beta$ denotes the \textit{inverse temperature}, $\Omega_N$ is a subset of $\bbN\times\bbZ^d$ where the random environment lies, $\bP_{N,\beta}^{\omega}$ is called the \textit{polymer measure}, and
\begin{equation}\label{Z}
Z_{N,\beta}^{\omega}=\bE\left[\exp\left(\sum\limits_{n=1}^{N}(\beta\omega_{n,S_n}-\lambda(\beta))\mathbbm{1}_{\{(n,S_n)\in\Omega_N\}}\right)\right]
\end{equation}  
is called the \textit{partition function}, which makes $\bP_{N,\beta}^{\omega}$ a (random) probability measure. Although this model is described by the probability measure $\bP_{N,\beta}^{\omega}$, we usually first study the partition function $Z_{N,\beta}^{\omega}$, which carries rich enough physical information for the system and is an  important step in the study of the polymer measure.

\subsection{Motivation of the paper}\label{S12} We first review the classic directed polymer model and the disordered pinning model, which motivate our work:

\noindent\textbf{$\bullet$ The classic $(1+d)$-dimensional directed polymer model:} The most standard and well-known definition of the directed polymer model is \eqref{dp} with $\Omega_N\equiv\bbN\times\bbZ^d$. That is, there is disorder at each time-space lattice point and the indicator in \eqref{dp} can thus be omitted.

\noindent\textbf{$\bullet$ The disordered pinning model:}
$\Omega_N\equiv\bbN\times\{0\}$, namely the disorder only lies on a \textit{defect line}. Since the distribution of $S$ is perturbed only when it returns to the origin, an alternative (and also more general) way to define the pinning model is by:
\begin{equation}\label{pinning}
\frac{\dd\bP_{N,\beta}^{\omega,h}}{\dd\bP}(\tau):=\frac{1}{Z_{N,\beta}^{\omega,h}}\exp\left(\sum\limits_{n=1}^{N}(\beta\omega_n-h)\mathbbm{1}_{\{n\in\tau\}}\right),
\end{equation}
where $h$ is an \textit{external field}, and $\tau=\{0,\tau_1,\tau_2,\ldots\}$ is a renewal process with
\begin{equation}\label{renewal}
K(n):=\bP(\tau_1=n)=\frac{L(n)}{n^{1+\alpha}},\quad\alpha\geq0\quad\text{and}\quad\sum\limits_{n=1}^{\infty}K(n)\leq1.
\end{equation}
Here $L(\cdot)$ is a slowly varying function (see \cite{BGT89} for reference), \textit{i.e.}, $L(x)>0$ on $(0,\infty)$ and for any $b>0$, $\lim_{x\to\infty}L(bx)/L(x)=1$. 
In this paper, we set $h=\lambda(\beta)$.

\begin{remark}\label{RWpin}
Clearly, the return times to $0$ for a simple random walk on $\bbZ^d$ give rise to a renewal process and hence define an associated pinning model. Indeed, it is known that in \eqref{renewal}, $K(2n)\sim c/n^{3/2}$ for $d=1$ $($\textit{cf.} \cite{BL18}$)$, $K(2n)\sim c/(n\log^{2}n)$ for $d=2$ $($\textit{cf.} \cite{AZ10}$)$, and $K(2n)\sim c/n^{d/2}$ for $d\geq3$ $($\textit{cf.} \cite{G11}$)$. Note that $\sum_{n=1}^{\infty}K(n)=1$ for $d=1,2$, while $\sum_{n=1}^{\infty}K(n)<1$ for $d\geq3$, due to the recurrence/transience of the underling walk.
\end{remark}

We have mentioned in the previous subsection that whether the random environment changes qualitatively the behavior of the underlying random walk, at least at high temperature (\textit{i.e.} for small $\beta>0$), is an important topic in the study of polymer models. Of particular interest is whether small disorder (small $\beta>0$) can induce such change: if disorders do not change the qualitative behavior of the underlying random walk for small enough $\beta>0$, then we say the model is \textit{disorder irrelevant}, while if disorders change the model for arbitrarily small $\beta>0$, then we say the model is \textit{disorder relevant}.

It is well-known that the classic $(1+d)$-dimensional directed polymer model is disorder relevant for $d=1$ and disorder irrelevant for $d\geq3$, and the pinning model associated with a recurrent renewal process is disorder irrelevant for $\alpha\in(0,\frac{1}{2})$ and disorder relevant for $\alpha>\frac{1}{2}$ (\textit{cf.} \cite{C17,G11}). The $(1+2)$-dimensional directed polymer and the pinning model for 1-dimensional simple random walk are \textit{critical}, where the situation is more subtle: whether disorder is relevant or irrelevant depends on the finer structure of the model. Recently, it has been shown in \cite{BL17,BL18} that disorder is still relevant for the two critical models, but their properties are very different from that of related disorder relevant models, and the models are called \textit{marginally relevant} (by contrast, if a model is critical and disorder is irrelevant, then the model is called \textit{marginally irrelevant}).

Based on the same simple random walk $S$ on $\bbZ^d$, if the region $\Omega_N$ is widened from the time-space line $\bbN\times\{0\}$ to the full space $\bbN\times\bbZ^d$, then the model (\ref{dp}) changes from the pinning model to the classic directed polymer model. In particular, by the discussion above, when $d=1$, we expect to observe a transition of the model from marginal relevance to disorder relevance. A natural question then arises: how does the behavior of the model change as the spatial width of the tube environment is enlarged gradually?

\subsection{Directed polymers in tube environments and the main results} Motivated by the question above, we define a new variant of the classic directed polymer model, called \textit{directed polymer in tube environments}, by
\begin{equation}\label{dptube}
\frac{\dd\bP_{N,\beta}^{\omega}}{\dd\bP}(S):=\frac{1}{Z_{N,\beta}^{\omega}}\exp\left(\sum\limits_{n=1}^{N}(\beta\omega_{n,S_n}-\lambda(\beta))\mathbbm{1}_{\{|S_n|\leq RN^a\}}\right),
\end{equation}
where $|\cdot|$ denotes the Euclidean distance, and $R\geq0$ and $a\in[0,1]$ are parameters for the region of the random environment. 
It interpolates between the disordered pinning model and the classic directed polymer model, where the former corresponds to $R=0$ and the latter corresponds to $a=1$ and $R\geq1$.
The assumption that the width of the environment region grows polynomially fast is for the simplicity. If the region does not grow purely polynomially but with a slowly varying correction, then as will be discussed in Remark~\ref{r:region}, the model does not change much except at criticality.
We note that a similar setting where the environment region widens gradually has been considered in \cite{CD13} for the first passage percolation model.

Generally speaking, disorder relevant models are difficult to treat for fixed $\beta>0$. During the last ten years, an alternative approach has been developed, that is, studying the \textit{weak coupling limits} of the partition functions $Z_{N,\beta_N}^{\omega}$ by sending $\beta_N\downarrow0$ at a suitable rate as $N\to\infty$. Roughly speaking, if we tune down $\beta_N$ at just the right rate, then the effect of disorder will neither vanish nor blow up as $N\to\infty$, and the polymer measure has non-trivial disordered continuum limits (\textit{cf.} \cite{AKQ10,AKQ14,CSZ16,CSZ17a,CSZ17b}). However, if $\beta_N$ is too small, then it is as if disorder is not present at all, while if $\beta_N$ is too large, then the polymer measure will still undergo qualitative change, similar to the case $\beta_N\equiv\beta>0$. Notice that this picture is only valid for disorder (marginally) relevant models, resulting in some non-trivial limit $\cZ$ for $Z_{N,\beta_N}^{\omega}$, while for disorder irrelevant models, $Z_{N,\beta_N}^{\omega}$ trivially converges to $1$ in probability.

The first work to study the weak coupling limit was \cite{AKQ14}, where the authors considered the classic $(1+1)$-dimensional directed polymer model, which is disorder relevant. They proved that for $\beta_N=\hat{\beta}N^{-1/4}$ with any $\hat{\beta}>0$, $Z_{N,\beta_N}^{\omega}$ converges weakly to a non-trivial limit $\cZ_{\hat{\beta}}$. Later the authors of \cite{CSZ17b} proved that for several classes of marginally relevant polymer models, it is necessary to choose $\beta_N=\hat{\beta}/L(N)$ with $L(N)$ the expected overlap between two independent copies of the underlying process, and furthermore, there is a phase transition on this scale: if $\hat{\beta}<\hat{\beta}_c$, then the partition functions converge to a non-trivial limit $\bZ_{\hat{\beta}}$, while if $\hat{\beta}\geq\hat{\beta}_c$, then the weak limit is 0.

The above facts also illustrate an essential difference between disorder relevance and marginal relevance. 
In the disorder relevant regime, the suitable $\beta_N$ decays polynomially fast and the value of $\hat{\beta}$ is not crucial, \textit{i.e.}, there is no phase transition in $\hat{\beta}$. 
By contrast, in the marginally relevant regime, the suitable $\beta_N$  decays as a slowly varying function and a critical $\hat\beta_c$ splits the marginally relevant regime into two sub-regimes: $\hat\beta<\hat\beta_c$, the so-called {weak disorder regime}, and $\hat{\beta}\geq\hat\beta_c$, the {strong disorder regime}.

In this paper, we consider the weak coupling limits for directed polymers in tube environments in all dimensions $d\geq1$ and parameters $R\geq0$ and $a\in[0,1]$. Note that the case $a\in(0,1]$ but $R=0$ is the same as $a=0$ and $R<1$. We determine the weak coupling limits of the partition functions in all cases in the following theorem, where we write $Z_N=Z_{N,\beta_N}^{\omega}$ for simplicity. The proof will be given in Section~\ref{S:pf}. From the theorem we can observe a transition from disorder relevance to marginal relevance at $d=1, a=\frac{1}{2}$, and a transition from marginal relevance to disorder irrelevance at $d=2,a=0$.

\begin{theorem}\label{T1}
For directed polymers in tube environments \eqref{dptube}, the following convergence results hold.

\noindent\textbf{$\bullet$ Disorder relevant regime:} For $d=1$, $a\in[\frac{1}{2},1]$ and any $R>0$, let $\beta_{N}=\hat{\beta}N^{-\frac{1}{4}}$ for some $\hat{\beta}>0$. We have
\begin{equation}\label{conv1}
Z_{N}\overset{d}{\longrightarrow}\cZ_{\hat{\beta}}:=1+\begin{cases}\dis\sum\limits_{k=1}^{\infty}\frac{\hat{\beta}^{k}}{k!}\iint\limits_{([0,1]\times\bbR)^{\otimes k}}\psi_{k}((t_{1},x_{1}),\cdots,(t_{k},x_{k}))\prod\limits_{j=1}^{k}W({\rm d}t_{j},{\rm d}x_{j}),\quad\text{for}~a\in\bigg(\frac{1}{2},1\bigg],\\[5pt]
\dis\sum\limits_{k=1}^{\infty}\frac{\hat{\beta}^{k}}{k!}\iint\limits_{([0,1]\times[-R,R])^{\otimes k}}\psi_{k}((t_{1},x_{1}),\cdots,(t_{k},x_{k}))\prod\limits_{j=1}^{k}W({\rm d}t_{j},{\rm d}x_{j}),\quad\text{for}~a=\frac{1}{2}.
\end{cases}
\end{equation}
where $\overset{d}{\rightarrow}$ denotes weak convergence, $\psi_{k}((t_{1},x_{1}),\cdots,(t_{k},x_{k}))=\prod_{j=1}^{k}\sqrt{2}g_{t_{j}-t_{j-1}}(x_{j}-x_{j-1})$ on $0<t_1<\cdots<t_k<1$ and is symmetric in its arguments, where 
$
g_{t}(x)=\frac{1}{\sqrt{2\pi t}}e^{-x^{2}/2t}
$,
and $W(t,x)$ is the time-space white noise. Furthermore, $\bbE[(Z_N)^2]\underset{N\to\infty}{\longrightarrow}\bbE[(\cZ_{\hat{\beta}})^2]$.

\noindent\textbf{$\bullet$ Marginally relevant regime:} If\\
{\rm (i)} $d=1, a=0, R\geq0$ and $\beta_N=\hat{\beta}\sqrt{\frac{\pi}{(2\lfloor R\rfloor+1)\log N}}$; or\\[5pt]
{\rm (ii)} $d=1, a\in(0,\frac{1}{2}), R>0$, and $\beta_N=\hat{\beta}\sqrt{\frac{\pi}{2(1-2a)RN^a\log N}}$; or\\[5pt]
{\rm (iii)} $d=2, a\in(0,1], R>0$, and $\beta_N=\hat{\beta}\sqrt{\frac{\pi}{(2a\wedge1)\log N}}$,\\
then
\begin{equation}\label{conv2}
Z_{N}\overset{d}{\longrightarrow}\bZ_{\hat{\beta}}=\bZ_{\hat{\beta},d,a}:=\begin{cases}
\exp\left(\sigma_{\hat{\beta},d,a}W_{1}-\frac{\sigma_{\hat{\beta},d,a}^{2}}{2}\right),&\quad\mbox{if}~\hat{\beta}\in(0,1),\\
0,&\quad\mbox{if}~\hat{\beta}\geq1,
\end{cases}
\end{equation}
where $W_{1}$ is standard normal and
\begin{equation}\label{sigma}
\sigma_{\hat{\beta},d,a}^2=\begin{cases}
\log\frac{1-2a\hat{\beta}^2}{1-\hat{\beta}^2},\quad&\text{for}~d=1, a\in[0,\frac{1}{2}),\\
\log\frac{1}{1-\hat{\beta}^2},\quad&\text{for}~d=2,a\in(0,1].
\end{cases}
\end{equation}
Furthermore, $\bbE[(Z_N)^2]\overset{N\to\infty}{\longrightarrow}\bbE[(\bZ_{\hat{\beta}})^2]$.

\noindent\textbf{$\bullet$ Disorder irrelevant regime:} If {\rm (i)} $d=2, a=0, R\geq0$; or {\rm (ii)} $d\geq3, a\in[0,1], R\geq0$, then $Z_N$ converges to $1$ in $\bbP$-probability for any $\beta_N\downarrow0$.
	
\end{theorem}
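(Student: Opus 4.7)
The plan is to use a polynomial chaos expansion of $Z_N$ in the disorder variables. Setting $\xi_{n,x}:=e^{\beta_N\omega_{n,x}-\lambda(\beta_N)}-1$, so that $\bbE[\xi_{n,x}]=0$ and $\sigma_N^2:=\bbE[\xi_{n,x}^2]=\beta_N^2(1+o(1))$, one expands the exponential in \eqref{Z} to obtain
\begin{equation*}
Z_N=1+\sum_{k=1}^N \sum_{\substack{0<n_1<\cdots<n_k\leq N\\ x_1,\ldots,x_k\in\bbZ^d,\,|x_j|\leq RN^a}}\bP(S_{n_j}=x_j,\,\forall\,j)\prod_{j=1}^k\xi_{n_j,x_j}.
\end{equation*}
The proof then splits into (a) computing the asymptotics of $\bbE[Z_N^2]$, which both identifies the correct scaling of $\beta_N$ and gives the $L^2$ limit (hence $\bbE[\cZ_{\hat\beta}^2]$ or $\bbE[\bZ_{\hat\beta}^2]$), and (b) upgrading $L^2$ convergence to convergence in distribution.

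In the disorder relevant regime ($d=1$, $a\in[\tfrac12,1]$, $\beta_N=\hat\beta N^{-1/4}$), the second moment reads
\begin{equation*}
\bbE[Z_N^2]-1=\sum_{k\geq 1}\sigma_N^{2k}\sum_{\substack{0<n_1<\cdots<n_k\leq N\\ |x_j|\leq RN^a}}\prod_{j=1}^k\bP(S_{n_j-n_{j-1}}=x_j-x_{j-1})^2.
\end{equation*}
After the parabolic rescaling $n_j=Nt_j$, $x_j=\sqrt{N}\,y_j$, the local CLT turns each summand into a Riemann approximation of $\hat\beta^{2k}\int\psi_k^2$: for $a>\tfrac12$ the constraint $|y_j|\leq RN^{a-1/2}$ is asymptotically vacuous and the integration extends to $\bbR^k$, while for $a=\tfrac12$ it persists and restricts the spatial integration to $[-R,R]^k$, reproducing the two cases in \eqref{conv1}. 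To upgrade $L^2$ convergence to weak convergence to the Wiener chaos $\cZ_{\hat\beta}$, I would follow \cite{AKQ14} and apply a Lindeberg replacement principle to substitute the $\xi_{n,x}$ by Gaussians, after which each fixed-$k$ chaos is recognised as a discrete multiple integral converging to the iterated stochastic integral against the time-space white noise $W$.

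The marginally relevant regime is the core of the paper. The pivotal quantity is the overlap $R_N:=\sum_{n=1}^N\sum_{|x|\leq RN^a}\bP(S_n=x)^2$, and the three cases (i)--(iii) are precisely those in which $R_N$ diverges logarithmically (with an additional polynomial prefactor $N^a$ in case (ii)). The scalings of $\beta_N$ are calibrated so that $\sigma_N^2 R_N\to\sigma_{\hat\beta,d,a}^2$, and the same chaos computation then yields $\bbE[Z_N^2]\to\sum_{k\geq 0}\sigma_{\hat\beta,d,a}^{2k}/k!=\exp(\sigma_{\hat\beta,d,a}^2)=\bbE[\bZ_{\hat\beta}^2]$, finite iff $\hat\beta<1$. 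In the subcritical range $\hat\beta<1$ I would identify the log-normal limit along the lines of \cite{CSZ17b}, by showing that the dominant diagonal contribution to $\log Z_N$ is a sum of weakly dependent Gaussian-like increments with asymptotic variance $\sigma_{\hat\beta,d,a}^2$, while off-diagonal terms are negligible. For $\hat\beta\geq 1$ the convergence $Z_N\to 0$ in probability would come from truncating the chaos at a slowly growing level $k_N\to\infty$ and controlling the tail via Burkholder-type inequalities. The principal technical obstacle, I expect, is the two-scale asymptotic analysis of $R_N$ in case (ii), where $RN^a$ lies strictly between the walk's recurrent and diffusive scales: contributions from $n\lesssim N^{2a}$ (before the walk fills the tube) and from $n\gtrsim N^{2a}$ (when it is essentially uniform on the tube) must be combined with matching constants to produce the $N^a\log N$ asymptotic that precisely matches the prescribed $\beta_N$.

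Finally, the disorder irrelevant regime is the easiest. The overlap $R_N$ stays bounded uniformly in $N$: for $d=2,a=0$ because $\sum_n\bP(S_n=0)^2<\infty$ by Remark~\ref{RWpin}, and for $d\geq 3$ by transience together with the polynomial decay of $\bP(S_n=x)$. By the standard convolution bound $\sigma_N^{2k}\sum_{n_1<\cdots<n_k\leq N}\prod_{j}\bP(S_{n_j-n_{j-1}}=x_j-x_{j-1})^2\leq (\sigma_N^2 R_N)^k/k!$, one obtains $\bbE[Z_N^2]-1\leq e^{\sigma_N^2 R_N}-1\to 0$ for any $\beta_N\downarrow 0$, and combined with $\bbE[Z_N]=1$ this gives $Z_N\to 1$ in $L^2$, hence in $\bbP$-probability.
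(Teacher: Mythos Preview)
Your framework is right for the relevant and irrelevant regimes, but the marginally relevant regime has a genuine gap in two places.

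First, the variance structure. You claim $\sigma_N^2 R_N\to\sigma_{\hat\beta,d,a}^2$ and then $\bbE[Z_N^2]\to\sum_{k\geq 0}\sigma_{\hat\beta,d,a}^{2k}/k!$. Both statements are incorrect. The calibration of $\beta_N$ gives $\sigma_N^2 R_N\to\hat\beta^2$, not $\sigma_{\hat\beta,d,a}^2$; and crucially there is \emph{no} factorial in the $k$-th chaos variance. Passing to increments $m_j=n_j-n_{j-1}$ one has $\var(\beta_N^k Z_{N,k})\sim\hat\beta^{2k}$, so the second-moment series is geometric, $\sum_k\hat\beta^{2k}=1/(1-\hat\beta^2)$, and \emph{this} is what produces the transition at $\hat\beta_c=1$. (Your own sentence is internally inconsistent: $\sum_k\sigma^{2k}/k!$ converges for every finite $\sigma$, so it cannot be ``finite iff $\hat\beta<1$''.) The quantity $\sigma_{\hat\beta,d,a}^2$ does not arise as the limit of any single overlap; it emerges only after the multiscale resummation of \cite{CSZ17b} --- the dominated-sequence decomposition of the index set and the reduction to a Wick exponential of a stochastic integral over $t\in[2a,1]$ (for $d=1$) or $t\in[0,1]$ (for $d=2$). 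A CLT for $\log Z_N$ via ``weakly dependent Gaussian increments'' is not how this works. For case (ii) in particular, the paper first approximates $Z_{N,k}$ in $L^2$ by the $k$-th chaos of a modified \emph{pinning} model (restricting to $n_j-n_{j-1}\geq N^{2a}$ and gluing the disorder across the tube width into a single variable $\zeta_n$), and only then applies the \cite{CSZ17b} machinery; the lower limit $2a$ in the resulting time integral is precisely what yields the numerator $1-2a\hat\beta^2$ in $\sigma^2$.

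Second, the supercritical case $\hat\beta\geq 1$. A chaos truncation plus Burkholder argument will not readily give $Z_N\to 0$, because you are outside the $L^2$ regime and no moment of order $\geq 1$ is under control. The paper instead shows that $\beta\mapsto\bbE[(Z_{N,\beta}^\omega)^\theta]$ is nonincreasing for every $\theta\in(0,1)$ via an FKG argument, and then bounds the $\limsup$ at any $\hat\beta'\geq 1$ by the already-established value at $\hat\beta<1$, namely $(1-\hat\beta^2)^{\theta(1-\theta)/2}\to 0$ as $\hat\beta\uparrow 1$.

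A minor correction in the irrelevant regime: your bound should read $(C\sigma_N^2 R_N)^k$ without the factorial, for the same reason as above; the conclusion $Z_N\to 1$ in $L^2$ is unaffected since $\beta_N\to 0$.
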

\begin{remark}\label{R1}
Note that for case {\rm (i)} in the marginally relevant regime, we have that $\bP(|S_n|\leq R)=\sum_{x=-\lfloor R\rfloor}^{\lfloor R\rfloor}\bP(S_n=x)$, which falls within the scope of \cite{CSZ17b}. Indeed, for $|2m|, |2m+1|\leq R$, $\bP(S_{2k}=2m)\sim\bP(S_{2k}=0)$ and $\bP(S_{2k+1}=2m+1)\sim\bP(S_{2k+1}=1)$ as $k\to\infty$, which are asymptotically $(\pi k)^{-1/2}$ uniformly in $m$. Hence, let $S'$ be an i.i.d.\ copy of $S$. Then
\begin{equation*}
\sum_{n=1}^N\bP^{\otimes2}(S'_n=S_n)\sim\sum_{n=1}^{\lfloor\frac{N}{2}\rfloor}\sum\limits_{|x|\leq\lfloor R\rfloor}\left(\bP(S_{2n}=x)^2+\bP(S_{2n-1}=x)^2\right)\sim\frac{2\lfloor R\rfloor+1}{\pi}\log N,
\end{equation*}
which matches the setting for the critical pinning model in \cite{CSZ17b}.

Also note that for case {\rm (ii)} in the marginally relevant regime, the scale of $\beta_N$ changes smoothly in the parameter $a$ as $a$ varies from 0 to $\frac{1}{2}$, and then we observe a transition to the regime of disorder relevance at $a=\frac{1}{2}$.
When $a=0$, the model is essentially the same as a pinning model.
When $a$ is increased from $0$ to $\frac{1}{2}$, the variance of $\log\bZ_{\hat{\beta},1,a}$ decreases. In particular, if we let $a\to(\frac{1}{2})^-$ in \eqref{sigma}, then $\bZ_{\hat{\beta},1,a}\to1$ in $\bbP$-probability. This is an interesting but expected phenomenon, since the proper scaling for $\beta$ at $d=1,a=\frac{1}{2}$ is $N^{-1/4}$, while the scaling in case {\rm (ii)} in the marginally relevant regime has an extra $(\log N)^{-1/2}$ at $a=\frac{1}{2}$, which is too large for the disorder to have an effect in the limit.
\end{remark}

\subsection{Related studies and discussions}\label{S14}
With the convergence of the partition function $Z_N$ at hand, it is natural to ask whether the polymer measure admits a continuum limit if we also rescale time and space diffusively. We note that there have also been a lot of progress in studying the random field of partition functions as the starting point of the polymer, see \cite{CSZ20,CSZ21,HH12,LZ20}. Such questions arise naturally in the study of singular stochastic PDEs, but are less natural for our model of directed polymer in tube environments. In this subsection, we review some known results for the classic directed polymers and discuss their possible extensions to our model. Besides, we also briefly discuss the reason why our environment region is chosen as tubes.

\vspace{0.2cm}
\noindent\textbf{$\bullet$ Convergence of the polymer measure.} We first review the phase transition of the directed polymer. It is well-known that there is a critical $\beta_c\geq0$, depending on the dimension $d$, that splits the value of inverse temperature into the so-called \textit{weak disorder regime} ($\beta\in[0,\beta_c)\cup\{0\}$) and the \textit{strong disorder region} ($\beta>\beta_c$), see \textit{e.g.}\ \cite{C17}. In particular, $\beta_c=0$ for dimension $d=1,2$, which correspond to disorder relevance and marginal relevance, and $\beta_c>0$ for $d\geq3$, which corresponds to disorder irrelevance. Generally speaking, when $\beta<\beta_c$, the disorder is too weak to change qualitatively the long-scale behavior of the random walk under the polymer measure, while when $\beta>\beta_c$, the disorder becomes strong enough such that the long-scale behavior of the random walk is changed qualitatively under the polymer measure. We emphasize that the problems in the strong disorder regime are very difficult, where much less is known so far, compared to the weak disorder regime.

Let us start with spatial dimensions $d\geq3$. For $\beta\in(0,\beta_c)$, a series of results culminated in \cite{CV06}, where the authors showed that, after diffusive scaling, the polymer measure $\bP_{N,\beta}^\omega$ converges to the Wiener measure. To be precise, for any bounded continuous functional $F: (C[0,1], \bbR^d)\to\bbR$, as $N\to\infty$,
\begin{equation}\label{conv2wiener}
\bE_{N,\beta}^\omega\left[F(X_\cdot)\right]\to\bE_W\left[F(B_\cdot/\sqrt{d})\right]\quad\text{in}~\bbP\text{-probability,}
\end{equation}
where $(X_t)_{t\in[0,1]}$ is the linear interpolation of the diffusively rescaled simple random walk, and $(B_t)_{t\in[0,1]}$ is the standard $d$-dimensional Brownian motion under the Wiener measure $\bP_W$. Note that here $\beta$ is fixed and the weak coupling scaling is not needed.

For dimension $d=2$, although $\beta_c=0$ for the classic directed polymer, there actually exists the same type of phase transition from the weak disorder regime to the strong disorder region on the scale $\beta_{N}=\hat{\beta}\sqrt{\pi}/\sqrt{\log N}$ with critical point $\hat{\beta}_c=1$, as discovered in \cite{CSZ17b}. The weak disorder regime in dimension $d=2$ is essentially the same as that in higher dimensions. In fact, it is recently shown in \cite{G21} that, under the weak coupling $\beta_N=\hat{\beta}\sqrt{\pi}/\sqrt{\log N}$ with $\hat{\beta}\in(0,1)$, \eqref{conv2wiener} also holds.

In dimension $d=1$, by contrast, there is no phase transition on the intermediate disorder scale $\beta_N=\hat{\beta}N^{-1/4}$. Nevertheless, for $\hat{\beta}>0$, it is shown in \cite{AKQ14} that the polymer measure converges to the \textit{continuum $(1+1)$-dimensional directed polymer measure} (see \cite{AKQ14+}), which is formally defined by replacing the underlying random walk by a Brownian motion, the lattice random environment by a time-space white noise, and the summation in \eqref{dp} by integration. In particular, the authors showed that
\begin{equation}\label{1dconv}
\frac{\sqrt{N}}{2}\bP_{N,\beta_N}^\omega(S_{tN}=x\sqrt{N})\overset{d}{\longrightarrow}\frac{\cZ_{\sqrt{2}\hat{\beta}}(0,0,t,x)\cZ_{\sqrt{2}\hat{\beta}}(t,x,1,*)}{\cZ_{\sqrt{2}\hat{\beta}}(0,0,1,*)},
\end{equation}
where $\cZ_\beta(t,x):=\cZ_\beta(0,0,t,x)$ is the solution of the 1-dimensional stochastic heat equation (SHE) with multiplicative noise (\textit{e.g.}\ \cite{PS00})
\begin{equation}\label{SHE}
\partial_t\cZ=\frac12\Delta\cZ+\beta\cZ\dot{W},\quad\cZ_\beta(0,x)=\delta_0(x),
\end{equation}
where  $\dot{W}$ denotes time-space white noise, and
\begin{equation*}
\cZ_\beta(t,x,s,*)=\int_\bbR\cZ_\beta(t,x,s,y)\dd y.
\end{equation*}
We denote the \textit{point-to-point} partition function by
\begin{equation}\label{p2p}
Z_{N,\beta_N}^\omega(t,x,s,y):=\bE\left[\exp\left(\sum\limits_{n=tN+1}^{sN}(\beta_N\omega_{n,S_n}-\lambda(\beta_N))\right)\mathbbm{1}_{\{S_{sN}=y\sqrt{N}\}}\Bigg|\mathbbm{1}_{\{S_{tN}=x\sqrt{N}\}}\right],
\end{equation}
where $(tN,x\sqrt{N})$ and $(sN,y\sqrt{N})$ are interpreted as the closest time-space points in the even lattice $\{(n,z)\in\bbN\times\bbZ:n+z~\text{is even}\}$. In contrast,
\begin{equation}\label{p2pl}
Z_{N,\beta_N}^\omega(t,x,s,*):=\bE\left[\exp\left(\sum\limits_{n=tN+1}^{sN}(\beta_N\omega_{n,S_n}-\lambda(\beta_N))\right)\Bigg|\mathbbm{1}_{\{S_{tN}=x\sqrt{N}\}}\right],
\end{equation}
is called \textit{point-to-line} (or one-point) partition function. Note that in this article, what we actually study is $Z_{N,\beta_N}^\omega(0,0,1,*)$. In view of \eqref{1dconv}-\eqref{p2pl}, it is natural to think of the partition function of the classic directed polymers as a discretization of the solution of SHE. 
The key to proving \eqref{1dconv} is to observe that
\begin{equation*}
\frac{\sqrt{N}}{2}\bP_{N,\beta_N}^\omega(S_{tN}=x\sqrt{N})=\frac{\sqrt{N}}{2}\frac{Z_{N,\beta_N}^\omega(0,0,t,x)Z_{N,\beta_N}^\omega(t,x,1,*)}{Z_{N,\beta_N}^\omega(0,0,1,*)},
\end{equation*}
and
\begin{equation*}
	\Big\{\frac{\sqrt{N}}{2}Z_{N,\beta_N}^\omega(t,x,s,y):0\leq t<s\leq 1,x,y\in\bbR\Big\}\overset{d}{\longrightarrow}
	\left\{\cZ_{\sqrt{2}\hat\beta}(t,x,s,y):0\leq t<s\leq 1,x,y\in\bbR\right\},
\end{equation*}
where the scaling factor $\sqrt{N}/2$ is due to the extra transition kernel to the end point and the period of the underlying simple random walk.
This suggests how to extend our model for the disorder relevant case to convergence of the polymer measure. We believe that in the weak coupling limit as in Theorem \ref{T1}, for $d=1,a\in(\frac{1}{2},1]$, we should obtain the same limit as the continuum directed polymer from \cite{AKQ14+}, because the polymer remains diffusive. For $d=1,a\in[0,\frac{1}{2})$ and $d=2,a\in(0,1]$, \eqref{conv2wiener} should also hold by adapting the method in \cite{G21}. The case $d=1,a=\frac{1}{2}$ is a bit subtle, and we expect that $\bP_{N,\beta_N}^\omega$ to converge to a continuum directed polymer measure defined through a white noise restricted to a time-space tube. Finally, for dimension $d\geq3$, disorder is irrelevant, and the same result as in \cite{CY06} should hold. Proof should be similar, using a second moment calculations. 

The most difficult case will be the scaling limit of the polymer measure in the marginally relevant regime at the critical point. As discussed in \cite{CSZ21}, for the classic $(1+2)$-dimensional directed polymer model, there is in fact a critical window $\hat{\beta}_c(1+\frac{\theta}{\log N})$ around the critical point $\hat{\beta}_c$ in $\beta_N=\hat{\beta}\sqrt{\pi}/\sqrt{\log N}$, which interpolates between the weak disorder and strong disorder regime, similar to the disorder scale $\beta_N=\hat{\beta}N^{-1/4}$ in dimension $d=1$. However, the partition function converges to $0$ in this critical window, so there is no analogue of \cite{CSZ20}. Nevertheless, the recent breakthrough \cite{CSZ21} showed that when the partition function is averaged over the starting point of the polymer, it has a scaling limit. This raises hope that convergence of polymer measure can also be obtained, at least if the starting point of the polymer is chosen uniformly from a ball instead of starting from a fixed point.

\vspace{0.2cm}
\noindent\textbf{$\bullet$ The choices of the environment region.} An alternative choice of the environment region is $\Omega_N:=\{(n,x): 1\leq n\leq N, |x|\leq Rn^a\}$. In that case, the model can be called \textit{directed polymers in cone environments}. However, it turns out that the model in cone environments has no essential difference compared to the model in tube environments and the former one is less convenient to handle. Thus, we will focus on the model in tube environments in this paper, which is enough to illustrate transitions from the pinning model to the classic directed polymer model. We will come back to this issue in Remark \ref{r:cone}.

\section{Proof of Theorem~\ref{T1}}\label{S:pf}
The general strategies of proofs for weak coupling limits have been elaborated in \cite{AKQ14,CSZ17a,CSZ17b}.
We are thus able to provide, based on existing results, a short proof of Theorem~\ref{T1} in Subsection~\ref{S:pfd}, without going through the whole picture. Nevertheless, to illustrate the key ideas behind them, we will first sketch some heuristics in Subsection~\ref{S:pfs}.
\subsection{Heuristics}\label{S:pfs}
We start with a standard polynomial chaos expansion for the partition function $Z_{N}$. For every $N$, we define
\begin{equation}\label{xi}
\xi_{n,x}:=\xi_{n,x}^{(N)}=\beta_{N}^{-1}(e^{\beta_{N}\omega_{n,x}-\lambda(\beta_{N})}-1).
\end{equation}

It is easy to check that $\bbE[\xi_{n,x}]=0$ and $\bbV\!\mbox{ar}(\xi_{n,x})=1+o_N(1)$ as $\beta_N\downarrow0$.
Using $e^{x\mathbbm{1}_{A}}=1+(e^{x}-1)\mathbbm{1}_{A}$ in the expression (\ref{Z}) of $Z_N$, we can rewrite
\begin{equation}\label{poly}
\begin{split}
&Z_{N}=\bE\left[\prod_{n=1}^{N}\Big(1+\beta_{N}\xi_{n,S_{n}}\mathbbm{1}_{\{|S_{n}|\leq RN^{a}\}}\Big)\right]\\
=&1+\sum\limits_{k=1}^{N}\beta_{N}^{k}\sum\limits_{1\leq n_{1}<\cdots<n_{k}\leq N\atop(x_{1},\cdots,x_{k})\in B(RN^{a})^{\otimes k}}\prod_{j=1}^{k} p_{n_j-n_{j-1}}(x_j-x_{j-1}) 
\xi_{n_{j},x_{j}}:=1+\sum_{k=1}^{N}\beta_N^k Z_{N,k},
\end{split}
\end{equation}
where $B(r)=\{x\in\bbZ^d:|x|\leq r\}$, $p_{n}(x)=\bP(S_{n}=x)$, and by convention $n_{0}=0$ and $x_{0}$ is the origin.

Since for $k\neq l$, there must be some $\xi_{n,x}$ appears exactly once in the product of $Z_{N,k}$ and $Z_{N,l}$, by the independence of $\xi_{n,x}$ we have $\bbE[Z_{N,k}Z_{N,l}]=0$ and thus $(Z_{N,k})_{1\leq k\leq N}$ are orthogonal in the $L_2$ space. Therefore, to establish the weak convergence of $Z_N$, it suffices to approximate $\beta_N^k Z_{N,k}$ in $L_2$ for $k\leq K$ given that $\sum_{k=K}^{\infty}\beta_N^k Z_{N,k}$ vanishes uniformly in $N$ as $K\to\infty$.
For any $k$, a second moment calculation (also see (4.3) in \cite{CSZ17b}) reveals that
\begin{equation}\label{varZnk}
\begin{split}
&\var(Z_{N,k})=\bbE\left[\xi_{1,0}^2\right]^k\sum\limits_{1\leq n_{1}<\cdots<n_{k}\leq N\atop(x_{1},\cdots,x_{k})\in B(RN^{a})^{\otimes k}}\prod_{j=1}^{k}p_{n_j-n_{j-1}}^2(x_j-x_{j-1})\\
\leq&\bbE\left[\xi_{1,0}^2\right]^k\sum\limits_{1\leq n_{1}-n_0<\cdots<n_{k}-n_{k-1}\leq N\atop\forall j, |x_j-x_{j-1}|\leq 2RN^a}\prod_{j=1}^{k}p_{n_j-n_{j-1}}^2(x_j-x_{j-1})\leq (c_d\var(Z_{1,k}))^k.
\end{split}
\end{equation}
Hence, to achieve a non-trivial limit, it is natural to choose $\beta_N$ of order $\var(Z_{N,1})^{-1/2}$ in view of \eqref{poly}-\eqref{varZnk}.

To calculate the variance of $Z_{N,1}$, we can decompose
\begin{equation*}
\var(Z_{N,1})\big/\bbE\left[\xi_{1,0}^2\right]=\sum_{n=1}^{N}\sum_{ |x|\leq RN^a} p_n^2(x)= \Bigg(\sum_{n=1}^{N^{1\wedge 2a}}\sum_{|x|\leq RN^a} + \sum_{n=N^{1\wedge 2a}}^{N}\sum_{|x|\leq RN^a}\Bigg)p_n^2(x) :=(\sigma_1^2+\sigma_2^2),
\end{equation*}
where the contribution to $\sigma_1^2$ (resp.\ $\sigma_2^2$) comes from the short (resp.\ long) time interval. A simple comparison as given in Lemma~\ref{L:I_N} below shows that only when $d=1,a\in(0,\frac{1}{2})$, the variance is dominated by $\sigma^2_2$, which suggests that in that case the model should behave similarly to the pinning model associated to the 1-dimensional simple random walk, as the environment region is thin (sub-diffusive) for large time scale. On the other hand, when $d=1,a\in[\frac{1}{2},1]$ or $d\geq2,a\in(0,1]$, the term $\sigma_1^2$ dominates, and it suggests that the model should be comparable to the classic directed polymer model. By \cite{AKQ14}, the classic $(1+1)$-dimensional directed polymer model is disorder relevant, while the $(1+2)$-dimensional directed polymer model and the pinning model associated to the 1-dimensional simple random walk are marginally relevant by \cite{CSZ17b}. For $d\geq3$, since $\var(Z_{N,1})$ is finite, the second moment of $Z_{N,\beta}^{\omega}$ is uniformly bounded for small enough $\beta>0$, and hence the model is disorder irrelevant.

In the disorder relevant and marginally relevant regimes, the limiting objects turn out to be some limits of Wiener chaos expansions.
Indeed, by a version of the Lindeberg principle \cite[Theorem~2.6]{CSZ17a} or \cite[Theorem~4.2]{CSZ17b}, $\xi_{n,x}$ in (\ref{poly}) can be replaced by independent standard normal random variables $W_{n,x}$ without affecting the limiting distribution.
On the diffusive scale, the local limit theorem allows us to approximate the weight function $p_n(x)$ by the Gaussian kernel.
Therefore, $Z_{N,k}$ can be written asymptotically as a multiple integral with respect to white noise with Gaussian weight over diffusively rescaled time and space.
With $\beta_N$ on the right scale as described above, $Z_N$ can be seen to converge to (\ref{conv1}) or (\ref{conv2}), once the covariance structure of $Z_{N,k}$ is determined.

\subsection{Proof}\label{S:pfd}
Following the heuristics above, the first crucial step for the proof is to estimate $\var(Z_{N,1})$, which is given by
\begin{equation*}
\var(Z_{N,1})=\sum_{1\leq n\leq N}\sum_{x\in B(RN^a)} \bP(S_n=x)^2 \bbE[\xi_{n,x}^2]
=\sum_{1\leq n\leq N}\bP^{\bigotimes2}\big(S_{n}=S_{n}^{'},|S_{n}|\leq RN^{a}\big)\big(1+o_N(1)\big),
\end{equation*}
where in the first equality we used \eqref{Evarw} and the independence of $\xi_{n,x}$, and in the second equality that $S'$ is an independent copy of $S$.
The following lemma determines its asymptotics. 
\begin{lemma}\label{L:I_N}
Let $a\in[0,1]$ and $R\geq0$.
Denote the expected intersection times within the ball $B(RN^a)$ of two independent simple random walks by
\begin{equation}\label{i_n}
I_{N}:=I_{N}^{a,R}:=\sum_{n=1}^{N}\bP^{\bigotimes2}(S_{n}=S_{n}^{'},|S_{n}|\leq RN^{a}).
\end{equation}
Then as $N\to\infty$, the following asymptotics hold:
\begin{equation*}
I_{N}\sim\begin{cases}
\dis C_{1,a,R}\sqrt{N},\quad&\mbox{for}~d=1,a\in[\frac{1}{2},1],R>0,\\[8pt]
\dis\frac{2(1-2a)}{\pi}RN^{a}\log N,\quad&\mbox{for}~d=1,a\in(0,\frac{1}{2}),R>0,\\[8pt]
\dis\frac{2\lfloor R\rfloor+1}{\pi}\log N,\quad&\mbox{for}~d=1, a=0, R\geq0,\\[8pt]
\dis\frac{1\wedge 2a}{\pi}\log N,\quad&\mbox{for}~d=2,a\in(0,1], R>0,\\[8pt]
\dis C_{d,a,R},\quad&\mbox{for}~d=2, a=0,R\geq0~\mbox{or}~d\geq3, a\in[0,1], R\geq0,
\end{cases}
\end{equation*}
where
\begin{equation*}
C_{d,a,R}=\begin{cases}
\dis\frac{2}{\sqrt{\pi}},\quad&\text{for}~d=1, a\in(\frac{1}{2},1],\\[5pt]
\dis\frac{1}{\pi}\int_{0}^{1}\frac{1}{\sqrt{t}}\int_{|x|\leq R/\sqrt{t}}e^{-|x|^{2}}\di x\di t,\quad&\text{for}~d=1, a=\frac{1}{2},\\[5pt]
\dis\sum\limits_{n=1}^{\infty}\bP^{\bigotimes2}(S_n=S_n', |S_n|\leq R),\quad&\text{for}~d\geq2,a=0,\\[5pt]
\dis\sum\limits_{n=1}^{\infty}\bP(S_{2n}=0),\quad&\text{for}~d\geq3,a\in(0,1].
\end{cases}
\end{equation*}
	
\end{lemma}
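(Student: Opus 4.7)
By independence of $S$ and $S'$,
\begin{equation*}
I_N = \sum_{n=1}^N \sum_{x \in B(RN^a)} p_n(x)^2,
\qquad p_n(x) := \bP(S_n = x).
\end{equation*}
The main input is the local central limit theorem for simple random walk on $\bbZ^d$,
\begin{equation*}
p_n(x) = \frac{2\,d^{d/2}}{(2\pi n)^{d/2}}\, e^{-d|x|^2/(2n)}\bigl(1 + o(1)\bigr),
\end{equation*}
valid uniformly on $|x|\le C\sqrt{n\log n}$ for $x$ of the correct parity, together with Gaussian tail bounds outside this window. Replacing the parity-restricted lattice sum by an integral (with a factor $\tfrac12$) and substituting $y = x\sqrt{d/n}$ give
\begin{equation*}
\sum_{|x|\le RN^a} p_n(x)^2 \sim \bP(S_{2n}=0)\,\Phi_d\!\Bigl(RN^a\sqrt{d/n}\Bigr),
\qquad \Phi_d(u) := \pi^{-d/2}\!\int_{|y|\le u}\! e^{-|y|^2}\,dy,
\end{equation*}
and the problem reduces to analysing $\sum_{n=1}^N \bP(S_{2n}=0)\,\Phi_d(RN^a\sqrt{d/n})$, which is governed by the comparison of $n$ with the diffusive threshold $N^{2a}$.

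\textbf{Case analysis.} I would split the $n$-sum at $n_* := N^{1\wedge 2a}$. For $n\le n_*$ the ratio $RN^a/\sqrt n$ is bounded below, so $\Phi_d\to 1$ and the partial sum reduces to $\sum_{n\le n_*}\bP(S_{2n}=0)$; for $n>N^{2a}$ (only possible if $2a<1$) one uses $\Phi_d(u)\sim C_d u^d$ as $u\to 0^+$. Then: for $d\ge 3$, and also for $d=2,\,a=0$, the unconstrained series $\sum_n\bP(S_{2n}=0)$ already converges, and dominated or monotone convergence yields the stated finite constants. For $d=2,\,a\in(0,1]$, the short-time piece is $\sum_{n\le n_*}1/(\pi n)\sim (1\wedge 2a)\log N/\pi$, while the long-time piece (if $a<\tfrac12$) is bounded by $R^2 N^{2a}\sum_{n>N^{2a}}1/n^2 = O(1)$ and is therefore negligible. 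For $d=1,\,a\in(0,\tfrac12)$, the short-time piece is $O(\sqrt{N^{2a}})=O(N^a)$ and subleading; the long-time piece contributes $\sum_{N^{2a}<n\le N}2RN^a/(\pi n)\sim (2(1-2a)/\pi)RN^a\log N$. For $d=1,\,a=0$, a direct LCLT computation on the finite lattice $B(R)$, averaging the number of parity-accessible points over the parity of $n$, gives the prefactor $(2\lfloor R\rfloor+1)/\pi$.

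\textbf{Hardest case and main obstacle.} The delicate regime is $d=1$, $a\in[\tfrac12,1]$, where the ball is at or above the diffusive scale and no coarse split is effective. I would instead pass to a Riemann-sum-to-integral limit via $t=n/N$, yielding
\begin{equation*}
I_N \sim \frac{\sqrt N}{\pi}\int_0^1 \frac{1}{\sqrt t}\int_{|y|\le RN^{a-1/2}/\sqrt t} e^{-y^2}\,dy\,dt.
\end{equation*}
For $a>\tfrac12$ the inner indicator becomes vacuous pointwise in $t$, the inner integral tends to $\sqrt\pi$, and the outer integral produces $(2/\sqrt\pi)\sqrt N$. For $a=\tfrac12$ the indicator $\{|y|\le R/\sqrt t\}$ persists and reproduces exactly $C_{1,1/2,R}\sqrt N$. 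The main technical difficulty is uniform control of the LCLT remainder in $(n,x)$, most acutely near small $t$, where the $1/\sqrt t$ singularity competes with short-time lattice effects; I would handle this via the two-step LCLT (Gaussian approximation on $|x|\le\sqrt{n\log n}$, and a Gaussian tail bound outside), combined with dominated convergence to justify the Riemann-sum passage.
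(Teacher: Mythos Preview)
Your proposal is correct and follows essentially the same route as the paper: both replace $p_n(x)^2$ by the Gaussian kernel via the local CLT, pass to a Riemann-sum/integral approximation, and split the time range at the diffusive threshold $N^{2a}$, with the same case-by-case bookkeeping (short-time dominates for $d=2$ and for $d=1,a\ge\tfrac12$; long-time dominates for $d=1,a\in(0,\tfrac12)$; summability for $d\ge3$ and $d=2,a=0$). The only cosmetic difference is that the paper handles the LCLT remainder by an explicit three-term expansion $J_N^{(1)}+J_N^{(2)}+J_N^{(3)}$ (main term, cross term, error squared) rather than invoking a two-step LCLT with tail bounds, but this is the same control in different packaging.
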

\begin{proof}
Note that
\begin{equation}\label{summand}
\bP^{\bigotimes2}(S_{n}=S_{n}^{'},|S_{n}|\leq RN^{a})=\sum\limits_{|x|\leq RN^{a}\wedge n}\bP(S_{n}=x)^{2}.
\end{equation}

Recall that the case $a>0, R=0$ is the same as $a=0, R<1$, and the case $d=1, a=0$ has actually been proved in \cite{CSZ17b}, see Remark \ref{R1}. By the local limit theorem, $\bP(S_n=x)\leq C_d n^{-d/2}$ uniformly for all $n$ and $x$. Hence, For $d\geq2, a=0$, $\eqref{summand}\leq C_{d,R}n^{-d}$, which is summable, and thus $C_{d,0,R}=\sum_{n=1}^{\infty}\bP(S_n=S_n', |S_n|\leq R)$. For $d\geq3$, $a\in(0,1]$, $\eqref{summand}\leq\bP(S_{2n}=0)$, which is also summable since the simple random walk in dimension $d\geq3$ is transient. Hence, by the dominated convergence theorem, $C_{d,a,R}=\sum_{n=1}^\infty\bP(S_{2n}=0)$.
	
We then focus on $d=1,2$ and $a\in(0,1]$. If $x$ can be visited at time $n$, then we write $x\leftrightarrow n$. By a finer local limit theorem \cite[Theorem~1.2.1]{L13},
\begin{equation}\label{I_N}
I_{N}=\sum_{\cA_N}\left(2\left(\frac{d}{2\pi n}\right)^{\frac{d}{2}}e^{-\frac{d|x|^{2}}{2n}}+E(n,x)\right)^{2},
\end{equation}
where $E(n,x)=O(n^{-1-d/2}\wedge|x|^{-2}n^{-d/2})$ and $\sum_{\cA_N}:=\sum_{n=1}^{N}\sum_{|x|\leq RN^{a}, x\leftrightarrow n}$. Note that
\begin{equation*}
\begin{split}
I_N&=4\left(\frac{d}{2\pi}\right)^{d}\sum\limits_{\cA_N}\frac{1}{n^{d}}e^{-\frac{d|x|^{2}}{n}}+4\left(\frac{d}{2\pi}\right)^{\frac{d}{2}}\sum\limits_{\cA_N}\frac{1}{n^{\frac{d}{2}}}e^{-\frac{d|x|^{2}}{2n}}E(n,x)+\sum\limits_{\cA_N}E(n,x)^{2}\\
&:=J_{N}^{(1)}+J_{N}^{(2)}+J_{N}^{(3)}.
\end{split}
\end{equation*}
We then estimate the three terms above.

\noindent\textbf{Term $J_{N}^{(1)}$.} Note that $S$ has period $2$. By a Riemann sum approximation, we have
\begin{equation}\label{J1}
J_{N}^{(1)}\sim 2\left(\frac{d}{2\pi}\right)^{d}\int_{1}^{N}\frac{1}{n^{d}}\int_{|x|\leq RN^{a}}e^{-\frac{d|x|^{2}}{n}}\di x\di n
=\frac{2d^{\frac{d}{2}}}{(2\pi)^{d}}\int_{1}^{N}\frac{1}{n^{\frac{d}{2}}}\int_{|x|\leq RN^{a}\sqrt{\frac{d}{n}}}e^{-|x|^{2}}\di x\di n.
\end{equation}
We need to consider (i) $a>\frac{1}{2}$, (ii) $a=\frac{1}{2}$, and (iii) $a<\frac{1}{2}$ separately.

\noindent\textbf{Case $\bf a>\frac{1}{2}$.} The inner integral converges to $\int_{\bbR^{d}}\exp(-|x|^2)dx$ for any $R>0$, and hence
\begin{equation*}
J_{N}^{(1)}\sim\begin{cases}
\frac{2}{\sqrt{\pi}}\sqrt{N},&\quad\mbox{for}~d=1,\\[5pt]
\frac{1}{\pi}\log N,&\quad\mbox{for}~d=2.
\end{cases}
\end{equation*}

\noindent\textbf{Case $\bf a=\frac{1}{2}$.} We split the domain $[1,N]$ of $n$ into two parts: (1) $[1,\epsilon N]$ and (2) $[\epsilon N,N]$, where $\epsilon$ is small.
For the first part, there exists $\delta_{\epsilon,d}\overset{\epsilon\to0}{\longrightarrow}0$, such that
\begin{equation*}
\int_{1}^{\epsilon N}\frac{1}{n^{\frac{d}{2}}}\int_{|x|\leq R\sqrt{\frac{dN}{n}}}e^{-|x|^{2}}{\rm d}x{\rm d}n\in\begin{cases}
\big[2(1-\delta_{\epsilon,1})\sqrt{\epsilon\pi N},2\sqrt{\epsilon\pi N}\big],&\quad\mbox{for}~d=1,\\[5pt] 
\big[\pi(1-\delta_{\epsilon,2})(\log N+\log\epsilon),\pi\log N\big],&\quad\mbox{for}~d=2,
\end{cases}
\end{equation*}
where we use the fact that $\int_{\bbR^d}e^{-|x|^2}\di x=\pi^{d/2}$ and the integral domain for $x$ is at least $\{|x|\leq R\sqrt{d/\epsilon}\}$, which approximates $\bbR^d$. For the second part, by a change of variable $n=tN$,
\begin{equation*}
\int_{\epsilon N}^{N}\frac{1}{n^{\frac{d}{2}}}\int_{|x|\leq R\sqrt{\frac{dN}{n}}}e^{-|x|^{2}}{\rm d}x{\rm d}n=N^{1-\frac{d}{2}}\int_{\epsilon}^{1}\frac{1}{t^{\frac{d}{2}}}\int_{|x|\leq R\sqrt{\frac{d}{t}}}e^{-|x|^{2}}{\rm d}x{\rm d}t.
\end{equation*}
For $d=1$, since $t^{-1/2}$ is integrable on $[0,1]$, we can let $\epsilon\to0$ and the above integral is finite. For $d=2$, the above integral is finite for any fixed $\epsilon$. Therefore,
\begin{equation*}
J_{N}^{(1)}\sim\begin{cases}
\big(\frac{1}{\pi}\int_{0}^{1}\frac{1}{\sqrt{t}}\int_{|x|\leq R/\sqrt{t}}e^{-|x|^{2}}{\rm d}x{\rm d}t\big)\sqrt{N},&\quad\mbox{for}~d=1,\\[5pt]
\frac{1}{\pi}\log N,&\quad\mbox{for}~d=2.
\end{cases}
\end{equation*}

\noindent\textbf{Case $\bf a<\frac{1}{2}$.} For $d=1$, we split the range of $[1,N]$ into two parts: (1) $[1, KN^{2a}]$ and (ii) $[KN^{2a},N]$, where $K$ is large.
For the first part,
\begin{equation*}
\int_{1}^{KN^{2a}}\frac{1}{\sqrt{n}}\int_{|x|\leq RN^{a}/\sqrt{n}}e^{-|x|^{2}}{\rm d}x{\rm d}n\leq C_{K,R}N^{a}.
\end{equation*}
For the second part, note that $e^{x}\overset{x\to0}{\longrightarrow}1$. Let $N\to\infty$ and then let $K\to\infty$. We obtain
\begin{equation*}
\int_{KN^{2a}}^{N}\frac{1}{\sqrt{n}}\int_{|x|\leq RN^{a}/\sqrt{n}}e^{-|x|^{2}}{\rm d}x{\rm d}n\sim 2RN^{a}\int_{KN^{2a}}^{N}\frac{1}{n}dn\sim2(1-2a)RN^{a}\log N.
\end{equation*}
Hence, recall \eqref{J1} and $J_{N}^{(1)}\sim\frac{2(1-2a)}{\pi}RN^{a}\log N$ for $d=1, a\in(0,\frac{1}{2})$.

For $d=2$, we split the range of $[1,N]$ into three parts: (1) $[1,\epsilon N^{2a}]$, (2) $[\epsilon N^{2a},KN^{2a}]$ and (3) $[KN^{2a},N]$, where $K$ is large and $\epsilon$ is small.
For the first part, there exists $\delta_{\epsilon}\overset{\epsilon\to0}{\longrightarrow}0$, such that
\begin{equation*}
\int_{1}^{\epsilon N^{2a}}\frac{1}{n}\int_{|x|\leq RN^{a}\sqrt{\frac{2}{n}}}e^{-|x|^{2}}{\rm d}x{\rm d}n\in[(1-\delta_{\epsilon})\pi(2a\log N+\log\epsilon),2a\pi\log N].
\end{equation*}
For the second part,
\begin{equation*}
\int_{\epsilon N^{2a}}^{KN^{2a}}\frac{1}{n}\int_{|x|\leq RN^{a}\sqrt{\frac{2}{n}}}e^{-|x|^{2}}{\rm d}x{\rm d}n\xlongequal{ n=N^{2a}t}{}\int_{\epsilon}^{K}\frac{1}{t}\int_{|x|\leq R\sqrt{\frac{2}{t}}}e^{-|x|^{2}}{\rm d}x{\rm d}t,
\end{equation*}
which is finite for any fixed $\epsilon,K$.
For the third part, note that $e^{x}\overset{x\to0}{\longrightarrow}1$. Let $N\to\infty$ and then let $K\to\infty$. We obtain
\begin{equation*}
\int_{KN^{2a}}^{N}\frac{1}{n}\int_{|x|\leq RN^{a}\sqrt{\frac{2}{n}}}e^{-|x|^{2}}{\rm d}x{\rm d}n\sim CR^{2}N^{2a}\int_{KN^{2a}}^{N}\frac{1}{n^{2}}{\rm d}n\sim\frac{CR^{2}}{K}.
\end{equation*}
Therefore, recall \eqref{J1} and $J_{N}^{(1)}\sim\frac{2a}{\pi}\log N$ for $d=2,a\in(0,\frac{1}{2})$.

\noindent\textbf{Term $J_{N}^{(2)}$.} Note that $E(n,x)=o(n^{\frac{d}{2}})$, so $J_{N}^{(2)}$ is negligible compared to $J_{N}^{(1)}$.

\noindent\textbf{Term $J_{N}^{(3)}$.} 
Recall that $E(n,x)=O(n^{-1-d/2}\wedge|x|^{-2}n^{-d/2})$.
We have
\begin{equation*}
J_N^{(3)}\leq\sum\limits_{n=1}^{N}\sum\limits_{|x|\leq n}n^{-(d+2)}\leq C\sum\limits_{n=1}^{\infty}n^{-2}<\infty. 
\end{equation*}

Combining all estimates above, the lemma for $d=1,2$ follows.
\end{proof}


We are now able to complete the proof of Theorem~\ref{T1}.
We will separately treat the cases of disorder irrelevance, disorder relevance, and marginal relevance. \vspace{10pt}

\noindent {\bf $\bullet$ Disorder irrelevant regime.} We prove the result by a second moment estimate. Recall $Z_{N,k}$ from \eqref{poly}. Using $\var(Z_{N,k})\leq(c\var(Z_{N,1}))^k$ (see \eqref{varZnk} and Lemma \ref{L:I_N}), we have that 
\begin{equation}\label{2nd}
\bbE|Z_N-1|^2=\sum\limits_{k=1}^{N}\beta_N^{2k}\var(Z_{N,k})\leq\sum\limits_{k=1}^{\infty}(C\beta_N)^{2k}=\frac{C\beta_N^2}{1-C\beta_N^2}\to0,\quad\text{for arbitrary}~\beta_N\downarrow0.
\end{equation}
\vspace{5pt}

\noindent{\bf $\bullet$ Disorder relevant regime.} For disorder relevant systems, \cite[Theorem~2.3]{CSZ17a} states a general convergence criterion. We will prove this case by verifying the conditions of \cite[Theorem~2.3]{CSZ17a}. Let us introduce a time-space lattice $\bT_N$ and its associated rescaled lattice $\bbT_N$ by
\begin{equation}
\bT_N:=\{(n,x)\in\bbN\times\bbZ :n\leftrightarrow x\},\quad\bbT_{N}:=\{(t,y)\in [0,1]\times\bbR:(Nt,\sqrt{N}y)\in\bT_N\}.
\end{equation}

Let $(\zeta_z)_{z\in\bbT_N}$ be such that $\zeta_z=\xi_{n,x}$, where $z:=z^{(N)}=(t,y)\in\bbT_N$ with $(Nt,\sqrt{N}y)=(n,x)$. Then by \eqref{xi},
\begin{equation}\label{zeta}
\bbE[\zeta_z]=0\qquad{\rm and}\qquad \var(\zeta_z)=1+o_N(1),\quad{\rm as~}N\to\infty.
\end{equation}

Introduce a symmetric function $\psi_{N,k}$ of $(z_1,\ldots,z_k)\in\bbT_N^{\otimes k}$. It vanishes if $z_i=z_j$ for any $i\neq j$, and for distinct $z_1=(t_1,y_1),\ldots,z_k=(t_k,y_k)$ with $0<t_1<\ldots<t_k\leq 1$,
\begin{equation*}
\psi_{N,k}((t_1,y_1),\ldots,(t_k,y_k))=\prod_{j=1}^k N^{-1/4} p_{(t_j-t_{j-1})N}((y_j-y_{j-1})\sqrt{N})\mathbbm{1}_{\{|y_j|\leq RN^{a-1/2}\}}.
\end{equation*}
Moreover, we define $\psi_{N,k}((t_1,y_1),\ldots,(t_k,y_k))=0$, if $(Nt_i,\sqrt{N}y_i)=(n_i,x_i)\in\bbN\times\bbZ$ ($i=1,\ldots,k$) but $\|x_i\|_1>n_i$ for some $i$.
Then we can write
\begin{equation*}
Z_{N,k}=1+\sum_{k=1}^{N}\frac{\hat{\beta}^k}{k!}\sum_{(z_1,\ldots,z_k)\in\bbT_N^{\otimes k}}\psi_{N,k}(z_1,\ldots,z_k) \prod_{j=1}^{k}\zeta_{z_j}.
\end{equation*}

Let $\mathcal{C}_N$ be the tessellation of $[0,1]\times\bbR$ indexed by $(t,y)\in[0,1]\times\bbR$ with $(Nt,\sqrt{N}y)=(n,x)\in\bbN\times\bbZ$ and $n+x$ is even, $\cC_N(z)=(t-\frac{1}{2N},t+\frac{1}{2N}]\times(y-\frac{1}{\sqrt{N}},y+\frac{1}{\sqrt{N}}]$, and $v_N:=|\cC_N(z)|=2N^{-3/2}$.
We extend the domain of $\psi_{N,k}$ to $\big([0,1]\times\bbR\big)^{\otimes k}$ by defining $\psi_{N,k}(\tilde z):=\psi_{N,k}(z)$ for $\tilde z\in\cC_N(z)$.

We then check the three conditions of \cite[Theorem~2.3]{CSZ17a}. Note that the condition (i) is directly satisfied by \eqref{zeta}. To verify the condition (ii), we need to show that $\|v_N^{-k/2}\psi_{N,k}-\psi_k\|_2\overset{N\to\infty}{\longrightarrow}0$. First, for distinct $z_1=(t_1,y_1),\cdots, z_k=(t_k,y_k)$, define its lattice approximation $(t_{N,i},y_{N,i})=(N^{-1} n_{N,i}, N^{-1/2}x_{N,i})$ such that $z_{N,i}=(n_{N,i}, x_{N,i})\in\bbN\times\bbZ$ and $z_i\in\mathcal{C}_N(z_{N,i})$. By the local limit theorem,
\begin{equation*}
\begin{split}
v_N^{-k/2}\psi_{N,k}(z_1,\ldots,z_k)&=2^{-k/2}N^{3k/4}\prod\limits_{j=1}^{k}N^{-k/4}p_{(t_{N,j}-t_{N,j-1})N}((y_{N,j}-y_{N,j-1})\sqrt{N})\mathbbm{1}_{\{|y_j|\leq RN^{a-1/2}\}}\\
\hspace{0.5cm}\overset{N\to\infty}{\longrightarrow}\psi_k(z_1,\ldots,z_k):&=\begin{cases}\prod\limits_{j=1}^{k}\sqrt{2}g_{t_j-t_{j-1}}(y_j-y_{j-1})\mathbbm{1}_{\{|y_j|\leq R\}},&\quad\text{if}~a=\frac{1}{2},\\
\prod\limits_{j=1}^{k}\sqrt{2}g_{t_j-t_{j-1}}(y_j-y_{j-1}),&\quad\text{if}~a\in(\frac{1}{2},1].
\end{cases}
\end{split}
\end{equation*}

Next, by the local large deviation of $1$-dimensional simple random walk (\textit{cf.} \cite[Theorem 3]{S67}), we have that $p_t(y)\leq Ct^{-1/2}\exp(-cy^2/t)$ uniformly in $t$ and $y$ for some constant $C, c>0$. Note that
\begin{equation*}
\int\cdots\int_{\bbR^k}\prod\limits_{j=1}^k\exp\left(-c\frac{(y_j-y_{j-1})^2}{t_j-t_{j-1}}\right)\di y_1\cdots\di y_k=\prod\limits_{j=1}^{k}C\sqrt{t_j-t_{j-1}}.
\end{equation*}
Then a Riemann sum approximation yields that
\begin{equation*}
\begin{split}
&\|v_N^{-k/2}\psi_{N,k}\|_2^2\\
\leq&k!C^k\int\cdots\int_{0<t_1<\cdots<t_k<1}\prod_{j=1}^k\frac{1}{t_j-t_{j-1}}\int\cdots\int_{\bbR^k}\prod\limits_{j=1}^k\exp\left(-c\frac{(y_j-y_{j-1})^2}{t_j-t_{j-1}}\right)\di y_1\cdots\di y_k\di t_1\cdots\di t_k\\
\leq&k!C^k\int\cdots\int_{0<t_1<\cdots<t_k<1}\prod_{j=1}^k\frac{1}{\sqrt{t_j-t_{j-1}}}\di t_1\cdots\di t_k\leq k!C^k k^{-ck},
\end{split}
\end{equation*}
where the last inequality is due to \cite[Lemma B.2]{CSZ17a}. Hence, $\lim_{N\to\infty}\|v_N^{-k/2}\psi_{N,k}\|_2=\|\psi_k\|_2$ by the dominated convergence theorem.

Now suppose that $f_N\overset{N\to\infty}{\longrightarrow}f$ almost surely and $\|f_N\|_p\overset{N\to\infty}{\longrightarrow}\|f\|_p$. Consider the function $2^p|f_N|^p+2^p|f|^p-|f_N-f|^p$, which is nonnegative since $|f_N-f|^p\leq(2\max\{|f_N|,|f|\})^p\leq2^p(|f_N|^p+|f|^p)$. By Fatou's Lemma,
\begin{equation*}
2^{p+1}\|f\|_p^p+\varliminf\limits_{N\to\infty}-\|f_N-f\|_p^p=\varliminf\limits_{N\to\infty}(2^p\|f_N\|_p^p+2^p\|f\|_p^p-\|f_N-f\|_p^p)\geq2^{p+1}\|f\|_p^p,
\end{equation*}
which implies $\varlimsup_{N\to\infty}\|f_N-f\|_p^p\leq0$. Hence, $\|v_N^{-k/2}\psi_{N,k}-\psi_k\|_2\overset{N\to\infty}{\longrightarrow}0$.

It remains to check the condition (iii), which follows immediately from $\sum_{k=0}^\infty C^k k^{-ck}<+\infty$. Then the proof is completed by a direct application of \cite[Theorem~2.3]{CSZ17a}.
\vspace{10pt}

\noindent{\bf $\bullet$ Marginally relevant regime.} We prove the results in this regime by showing that $Z_N$ is arbitrarily close to the partition function of the pinning or directed polymer model whose weak coupling limits are already known. 
Recall that $\beta_N=\hat{\beta}/\sqrt{I_N}$. 
We will first treat the case $\hat{\beta}\in(0,1)$ and then $\hat{\beta}\geq1$.

\vspace{0.15cm}
\noindent{\bf (i) Case $\bf\hat{\beta}\in(0,1)$.} We perform a second moment estimate similar to \eqref{2nd}. For any positive integer $K$, we have that
\begin{equation*}
\sum\limits_{k=K}^{\infty}\beta_N^{2k}\var(Z_{N,k})\leq\sum\limits_{k=K}^{\infty}((1+o_N(1))\hat{\beta})^{2k}\overset{K\to\infty}{\longrightarrow}0.
\end{equation*}

Hence, as described in \cite[Section 4]{CSZ17b}, it suffices to show that $1+\sum_{k=1}^K \beta_N^k Z_{N,k}$ converges in distribution to some random variable $\bZ_{\hat{\beta},K}$ as $N\to\infty$ and then $\bZ_{\hat{\beta},K}$ converges in $L_2$ to the desired limit $\bZ_{\hat{\beta}}$ in \eqref{conv2} as $K\to\infty$. In particular, we only need to tackle $Z_{N,k}$ for finitely many $k=1,\cdots,K$. We actually show that as $N\to\infty$, $Z_{N,k}$ is arbitrarily close in $L_2$ to the \textit{spatially constraint-free} $k$-th term of some polynomial chaos expansion, which is
\begin{equation}\label{hatZ}
\hat{Z}_{N,k}:=\begin{cases}
\sum\limits_{1\leq n_1<\cdots<n_k\leq N\atop\forall j, n_j-n_{j-1}\geq N^{2a}}\prod\limits_{j=1}^{k}p_{n_j-n_{j-1}}(0)\xi_{n_j,0},&\quad\mbox{for}~d=1, a\in(0,\frac{1}{2}),\\
\sum\limits_{1\leq n_1<\cdots<n_k\leq N\atop (x_1,\cdots,x_k)\in\bbZ^k}\prod\limits_{j=1}^{k}p_{n_j-n_{j-1}}(x_j-x_{j-1})\xi_{n_j,x_j},&\quad\mbox{for}~d=2, a\in(0,1].
\end{cases}
\end{equation}

Note that for $d=2$, $\hat{Z}_{N,k}$ is exactly the $k$-th term of the polynomial chaos expansion for the partition function of the classic $(1+2)$-dimensional directed polymer model. For $d=1$, $\hat{Z}_{N,k}$ is related to the partition function of a pinning model associated to a $1$-dimensional simple random walk, with the pinning only counted for renewal time larger than $N^{2a}$. This model does not match the setting in \cite{CSZ17b} completely. Nevertheless, with some slight effort, we find that the method in \cite{CSZ17b} can also be applied to the model.

The details for (a) $d=1, a\in(0,\frac{1}{2})$, (b) $d=2, a\in[\frac{1}{2},1]$ and (c) $d=2, a\in(0,\frac{1}{2})$ are slightly different and hence we will separate the proofs for each case (The case $d=1,a=0$ has been sketched in Remark \ref{R1}). The first case is the most lengthy one, for which we give full details, where many computations can be reused in the other two cases.

\vspace{0.15cm}
\noindent\textbf{(a) $\bf d=1, a\in(0,\frac{1}{2})$.} As we mentioned above, our strategy is to approximate $\hat{Z}_{N,k}$ by $Z_{N,k}$. Recall that in this case, $\beta_N=\hat{\beta}\sqrt{\pi/2(1-2a)RN^a\log N}$.

Since $a\in(0,\frac{1}{2})$, we can find some $\epsilon>0$ such that $2a+\epsilon<1$. Then we write
\begin{equation}\label{indep}
Z_{N,k}=Z_{N,k}^{<}+Z_{N,k}^{\geq},
\end{equation}
where
\begin{equation*}
Z_{N,k}^{<}=\sum\limits_{\cA_{N,k}^{<}}\prod\limits_{j=1}^{k}p_{n_j-n_{j-1}}(x_j-x_{j-1})\xi_{n_j,x_j}\quad\text{and}\quad Z_{N,k}^{\geq}=\sum\limits_{\cA_{N,k}^{\geq}}\prod\limits_{j=1}^{k}p_{n_j-n_{j-1}}(x_j-x_{j-1})\xi_{n_j,x_j}
\end{equation*}
with
\begin{equation*}
\sum\limits_{\cA_{N,k}^{<}}:=\sum\limits_{1\leq n_1<\cdots<n_k\leq N, \atop\exists j, n_j-n_{j-1}< N^{2a+\epsilon}}\sum\limits_{(x_1,\cdots,x_k)\in B(RN^a)^{\bigotimes k}}\quad\text{and}\quad\sum\limits_{\cA_{N,k}^{\geq}}:=\sum\limits_{1\leq n_1<\cdots<n_k\leq N, \atop\forall j, n_j-n_{j-1}\geq N^{2a+\epsilon}}\sum\limits_{(x_1,\cdots,x_k)\in B(RN^a)^{\bigotimes k}}.
\end{equation*}
We claim that $Z_{N,k}^<$ can be made arbitrarily small by choosing $\epsilon$ small enough.

Since for any time sequence $(n_1,\cdots,n_k)$ in $\cA_{N,k}^{<}$ and $(m_1,\cdots,m_k)$ in $\cA_{N,k}^{\geq}$, there must be some $j$ such that $n_j-n_{j-1}<m_j-m_{j-1}$ and hence $\bbE[Z_{N,k}^{<}Z_{N,k}^{\geq}]=0$ by the independence of $\xi$. Therefore, to show that $Z_{N,k}^<$ is negligible, we only need to compare their second moments. We have that
\begin{equation*}
\var(Z_{N,k}^{<})=\sum\limits_{\cA_{N,k}^{<}}\prod\limits_{j=1}^{k}p_{n_j-n_{j-1}}^2(x_j-x_{j-1})\bbE\big[\xi_{n_j,x_j}^2\big],\quad
\var(Z_{N,k}^{\geq})=\sum\limits_{\cA_{N,k}^{\geq}}\prod\limits_{j=1}^{k}p_{n_j-n_{j-1}}^2(x_j-x_{j-1})\bbE\big[\xi_{n_j,x_j}^2\big].
\end{equation*}

For $\var(Z_{N,k}^{<})$, by recalling $I_N^{a,R}=\sum_{n=1}^N\bP(S_n=S_n',|S_n|\leq RN^a)$ from \eqref{i_n}, we have that
\begin{equation}\label{varZ<bound}
\begin{split}
&\var(Z_{N,k}^{<})\big/\bbE\left[\xi_{1,0}^2\right]^k\leq\sum\limits_{1\leq n_1-n_0,\cdots, n_k-n_{k-1}\leq N\atop\exists j, n_j-n_{j-1}< N^{2a+\epsilon}}\sum\limits_{(x_1,\cdots,x_k)\in B(RN^a)^{\bigotimes k}}\prod\limits_{j=1}^{k}p_{n_j-n_{j-1}}^2(x_j-x_{j-1})\\
\leq&\sum\limits_{j=1}^{k}\binom{k}{j}\Big(I_N^{a,2R}\Big)^{k-j}\bigg(\sum_{n=1}^{N^{2a+\epsilon}}\bP(S_n=S'_n,|S_n|\leq 2RN^a)\bigg)^{j}\leq\epsilon C_{K,a,R}\left(N^a\log N\right)^k,
\end{split}
\end{equation}
where in the last inequality, we follow the same lines in proving Lemma \ref{L:I_N} to get that for $d=1,a\in(0,\frac{1}{2})$, $\sum_{n=1}^{N^{2a+\epsilon}}\bP(S_n=S'_n,|S_n|\leq 2RN^a)\sim4\epsilon RN^a\log N$ as $N\to\infty$ and $C_{K,a,R}>0$ is a uniform constant for $k=1,\cdots,K$ and independent of $N$.

For $\var(Z_{N,k}^{\geq})$, we have that
\begin{equation}\label{varZ>bound}
\begin{split}
&\var(Z_{N,k}^{\geq})\big/\bbE\left[\xi_{1,0}^2\right]^k\geq\sum\limits_{N^{2a+\epsilon}\leq n_1-n_0,\cdots,n_k-n_{k-1}\leq N/k\atop(x_1-x_0,\cdots,x_k-x_{k-1})\in B(RN^a/k)^{\bigotimes k}}\prod\limits_{j=1}^{k}p_{n_j-n_{j-1}}^2(x_j-x_{j-1})\\
=&\bigg(\sum\limits_{n=N^{2a+\epsilon}}^{N/k}\bP(S_n=S'_n,|S_n|\leq RN^a/k)\bigg)^k\overset{N\to\infty}{\sim}\left(\frac{2(1-2a-\epsilon)R}{k\pi}N^a\log N\right)^k,
\end{split}
\end{equation}
where the last asymptotic behavior is uniform for $k=1,\cdots,K$ (check the proof of Lemma \ref{L:I_N}). Note that $\bbE[\xi_{1,0}^2]=1+o_N(1)$. Hence, the contribution from $Z_{N,k}^{<}$ can be made arbitrarily small by choosing small enough $\epsilon$, so we only need to deal with $Z_{N,k}^{\geq}$ in the following.

We now show that $\hat{Z}_{N,k}$ can be approximated by $Z_{N,k}^{\geq}$. Since $\forall j, |x_j-x_{j-1}|\leq 2RN^a$ and $n_j-n_{j-1}\geq N^{2a+\epsilon}$, by the local limit theorem, we have that $p_{n_j-n_{j-1}}(x_j-x_{j-1})\sim(2/\pi(n_j-n_{j-1}))^{1/2}$, uniformly for all $j$ with $x_j-x_{j-1}\leftrightarrow n_j-n_{j-1}$. Then we have
\begin{equation}\label{Z>=}
Z_{N,k}^{\geq}=\sum\limits_{1\leq n_1<\cdots<n_k\leq N\atop\forall j, n_j-n_{j-1}\geq N^{2a+\epsilon}}\sum\limits_{(x_1,\cdots,x_k)\in B(RN^a)^{\bigotimes k}\atop\forall j, x_j\leftrightarrow n_j}\prod\limits_{j=1}^{k}(1+o_N(1))\sqrt{\frac{2}{\pi(n_j-n_{j-1})}}\prod\limits_{j=1}^{k}\xi_{n_j,x_j},
\end{equation}
where we replace the restriction $x_j-x_{j-1}\leftrightarrow n_j-n_{j-1}$ by $x_j\leftrightarrow n_j$ by simple induction.
	
Note that by $n_j\geq n_j-n_{j-1}\geq N^{2a+\epsilon}\gg RN^a$, for any $x\in B(RN^a)$ with $x\leftrightarrow n_j$, $x$ is reachable by the simple random walk at time $n_j$. For the same reason, we have that $\{x\in B(RN^a): x\leftrightarrow n_j\}\cap\{x\in B(RN^a): x\leftrightarrow n_j+1\}=\emptyset$ and $\{x\in B(RN^a): x\leftrightarrow n_j\}\cup\{x\in B(RN^a): x\leftrightarrow n_j+1\}=B(RN^a)$. Hence, by writing $\Delta n_j=n_j-n_{j-1}$ and $\Delta x_j=x_j-x_{j-1}$ and assuming $\Delta n_j$ is even, we have the following combination 
\begin{equation*}
\begin{split}
&\sqrt{\frac{2}{\pi(\Delta n_j-1)}}\sum\limits_{|x_j|\leq RN^a\atop x_j\leftrightarrow n_j-1}\xi_{n_j-1,x_j}+\sqrt{\frac{2}{\pi\Delta n_j}}\sum\limits_{|x_j|\leq RN^a\atop x_j\leftrightarrow n_j}\xi_{n_j,x_j}\\
=&(1+o_N(1))p_{\Delta n_j}(0)\left(\sum\limits_{|x_j|\leq RN^a\atop x_j\leftrightarrow n_j-1}\xi_{n_j-1,x_j}+\sum\limits_{|x_j|\leq RN^a\atop x_j\leftrightarrow n_j}\xi_{n_j,x_j}\right).
\end{split}
\end{equation*}

For all even integers $n\geq2$, we introduce
\begin{equation}
\zeta_n:=\zeta_n^{(N)}=\frac{1}{\sqrt{2RN^a}}\left(\sum_{|x|\leq RN^a\atop x\leftrightarrow n-1}\xi_{n-1,x}+\sum_{|x|\leq RN^a\atop x\leftrightarrow n}\xi_{n,x}\right).
\end{equation}
Intuitively, we ``glue'' all reachable disorders for two consecutive times. Now we have
\begin{equation}\label{variantsummand}
\sum\limits_{n_j=n_{j-1}+N^{2a+\epsilon}}^N\sum\limits_{|x_j|\leq RN^a}p_{n_j-n_{j-1}}(x_j-x_{j-1})\xi_{n_j,x_j}=\sqrt{2RN^a}\sum\limits_{n_j=n_{j-1}+N^{2a+\epsilon}}^N(1+o_N(1))p_{n_j-n_{j-1}}(0)\zeta_{n_j}.
\end{equation}
Note that by the periodicity of the simple random walk, when writing \eqref{Z>=} by \eqref{variantsummand}, the summand is non-zero only if all $n_j$'s are even, and thus $\zeta_{n_j}$'s in \eqref{variantsummand} are well-defined.

Now we can write
\begin{equation}\label{approx}
\begin{split}
Z_{N,k}^{\geq}=(2RN^a)^{\frac{k}{2}}\sum\limits_{1\leq n_1<\cdots<n_k\leq N\atop\forall j, n_j-n_{j-1}\geq N^{2a+\epsilon}}\prod\limits_{j=1}^{k}(1+o_N(1))p_{n_j-n_{j-1}}(0)\zeta_{n_j}.
\end{split}
\end{equation}

Note that $\bbE[\zeta_n]=0$, $\var(\zeta_n)\overset{N\to\infty}{\longrightarrow}1$. By \cite[Section 5]{CSZ17b}, we can drop all $o_N(1)$'s in \eqref{approx} and replace $\xi_{n,0}$ by $\zeta_n$ in $\hat{Z}_{N,k}$ without changing the weak limit. Hence, in the following, we simply write
\begin{equation*}
Z_{N,k}^\geq=(2RN^a)^{\frac{k}{2}}\sum\limits_{1\leq n_1<\cdots<n_k\leq N\atop\forall j, n_j-n_{j-1}\geq N^{2a+\epsilon}}\prod\limits_{j=1}^{k}p_{n_j-n_{j-1}}(0)\zeta_{n_j}\quad\text{and}\quad \hat{Z}_{N,k}=\sum\limits_{1\leq n_1<\cdots<n_k\leq N\atop\forall j, n_j-n_{j-1}\geq N^{2a}}\prod\limits_{j=1}^{k}p_{n_j-n_{j-1}}(0)\zeta_{n_j}.
\end{equation*}

We show that $\|\beta_N^k Z_{N,k}^{\geq}-\tilde{\beta}_N^k\hat{Z}_{N,k}\|_2\overset{N\to0}{\longrightarrow}0$, where $\tilde{\beta}_N=\hat{\beta}\sqrt{\pi/(1-2a)\log N}$. We adapt the method in \cite[Lemma 6.1]{CSZ17b} here to write.
\begin{equation}\label{compare}
\bbE\big[(\beta_N^k Z_{N,k}^{\geq}-\tilde{\beta}_{N}^k\hat{Z}_{N,k})^2\big]=\bbE\big[(\tilde{\beta}_N^k\hat{Z}_{N,k})^2\big]-\bbE\big[(\beta_N^k Z_{N,k}^{\geq})^2\big]-2\bbE\big[(\tilde{\beta}_N^k\hat{Z}_{N,k}-\beta_N^k Z_{N,k}^{\geq})\beta_N^k Z_{N,k}^{\geq}\big].
\end{equation}
First note that the terms $(2RN^a)^{k/2}$ in both $\beta_N$ and $Z_{N,k}^\geq$ cancel out. Then, reasoning as \eqref{varZ<bound} and \eqref{varZ>bound}, by enlarging the domain of $(n_1,\cdots,n_k)$ from $1\leq n_1\leq\cdots\leq n_k\leq N$ to $1\leq n_1-n_0,\cdots,n_k-n_{k-1}\leq N$ and shrink it to $1\leq n_1-n_0,\cdots,n_k-n_{k-1}\leq N/k$, the first two second moments on the right-hand side of \eqref{compare} both converge to $\hat{\beta}^{2k}$ as $N\to\infty$ and $\epsilon\to0$. Finally, in the last term,
\begin{equation*}
\tilde{\beta}_N^k\hat{Z}_{N,k}-\beta_N^k Z_{N,k}^{\geq}=\tilde{\beta}\sqrt{\frac{\pi}{(1-2a)\log N}}\sum\limits_{1\leq n_{1}<\cdots<n_k\leq N\atop\forall j, N^{2a}\leq n_j-n_{j-1}<N^{2a+\epsilon}}\prod_{j=1}^k p_{n_j-n_{j-1}}(0)\zeta_{n_j}.
\end{equation*}
By the independence of $\zeta_n$, the last expectation is $0$. Hence, we get that $\beta_N^k Z_{N,k}^{\geq}$ can be made arbitrarily close to $\tilde{\beta}_N^k\hat{Z}_{N,k}$ in $L_2$.

It remains to show that $Z_N^{(K)}:=1+\sum_{k=1}^K\tilde{\beta}_N^k\hat{Z}_{N,k}$ converges in distribution to the desired limit. We mimic the procedure in \cite[Section 4]{CSZ17b} by only pointing out some key steps.

Let us denote $\hat{Z}_{N,k}^{(1)}:=\hat{Z}_{N,k}$. By the same argument for \eqref{compare}, we can enlarge the domain of $(n_1,\cdots,n_k)$ to approximate $\hat{Z}_{N,k}^{(1)}$ in $L_2$ by
\begin{equation*}
\hat{Z}_{N,k}^{(2)}:=\sum\limits_{N^{2a}\leq n_1-n_{0},\cdots,n_k-n_{k-1}\leq N}\prod\limits_{j=1}^{k}p_{n_j-n_{j-1}}(0)\zeta_{n_j}.
\end{equation*}

Then, for any positive integer $M$, we introduce
\begin{equation*}
\hat{Z}_{N,k}^{(3)}:=\sum\limits_{N^{\frac{\lfloor 2aM\rfloor}{M}}\leq n_1-n_{0},\cdots,n_k-n_{k-1}\leq N}\prod\limits_{j=1}^{k}p_{n_j-n_{j-1}}(0)\zeta_{n_j}.
\end{equation*}
It is not hard to show that $((1-2a-o_M(1)-o_N(1))/(1-2a))^k\leq\var(\tilde\beta_N^k\hat{Z}_{N,k}^{(3)})\leq 1$. Hence, we can approximate $\hat{Z}_{N,k}^{(2)}$ by $\hat{Z}_{N,k}^{(3)}$ in $L_2$ as $N\to\infty$ and $M\to\infty$.

Now we can split the interval $(N^{\lfloor 2aM\rfloor/M},N]$ according to $M$. Let $i_M^a=\lfloor 2aM\rfloor$, then
\begin{equation}\label{splitinterval}
\Big(N^{\frac{\lfloor 2aM\rfloor}{M}},N\Big]=\bigcup\limits_{i=i_M^a}^{M-1}I_i:=\bigcup\limits_{i=i_M^a}^{M-1}\Big(N^{\frac{i}{M}},N^{\frac{i+1}{M}}\Big],
\end{equation}
and we can write
\begin{equation*}
\hat{Z}_{N,k}^{(3)}=\left(\frac{(1-2a)\log N}{\pi M}\right)^{\frac{k}{2}}\sum\limits_{i_M^a\leq i_1,\cdots,i_k\leq M-1}Z_{i_1,\cdots,i_k}^{N,M},
\end{equation*}
where
\begin{equation*}
Z_{i_1,\cdots,i_k}^{N,M}=\left(\frac{\pi M}{(1-2a)\log N}\right)^{\frac{k}{2}}\sum\limits_{n_1-n_0\in I_{i_1},\cdots,n_k-n_{k-1}\in I_{i_k}}\prod\limits_{j=1}^{k}p_{n_j-n_{j-1}}(0)\zeta_{n_j}.
\end{equation*}

Next, we perform a \textit{dominated sequence decomposition} as (4.9) in \cite{CSZ17b}. We call $\boldsymbol{i}=\{i_1,\cdots,i_\ell\}$ a dominated sequence if $i_1>i_2,\cdots,i_\ell$. Then any sequence $\{i_1,\cdots,i_k\}$ can be decomposed by consecutive dominated subsequences $\boldsymbol{i}^{(1)}=\{i_1,\cdots,i_{\ell_2}-1\},\cdots,\boldsymbol{i}^{(m)}=\{i_{\ell_m},\cdots,i_k\}$, where $m:=m(\boldsymbol{i})$ and $i_{\ell_1}=i_1<\cdots<i_{\ell_m}$. Steps \textbf{(A2)}-\textbf{(A3)} in \cite{CSZ17b} together yield that $\hat{Z}_{N,k}^{(3)}$ can be approximated in $L_2$ by
\begin{equation*}
\hat{Z}_{N,k}^{(4)}:=\left(\frac{(1-2a)\log N}{\pi M}\right)^{\frac{k}{2}}\sum\limits_{\boldsymbol{i}\in\{i_M^a,\cdots,M\}_\#^k}Z_{\boldsymbol{i}^{(1)}}^{N,M}\cdots Z_{\boldsymbol{i}^{(m)}}^{N,M},
\end{equation*}
where $\{i_M^a,\cdots,M\}_\#^k:=\{(i_1,\cdots,i_k)\in\{i_M^a,\cdots,M\}^{\bigotimes k}: |i_j-i_{j'}|\geq 2, \forall j\neq j'\}$.

Finally, by steps \textbf{(K)}-\textbf{(A4)} in \cite{CSZ17b}, Let $N\to\infty$, and then $K\to\infty$, and lastly $M\to\infty$
\begin{equation*}
\begin{split}
&1+\sum\limits_{k=1}^K\tilde\beta_N^k\hat{Z}_{N,k}^{(4)}\overset{d}{\longrightarrow}1+\sum\limits_{k=1}^\infty\int\cdots\int_{2a<t_1<\cdots<t_k<1}\prod\limits_{j=1}^k\frac{\hat{\beta}}{\sqrt{1-\hat{\beta}^2 t_j}}\di W_{t_j}\\
=&:\exp\left(\int_{2a}^1\frac{\hat{\beta}}{\sqrt{1-\hat{\beta}^2 t}}\di W_t\right):=\exp\left(\int_{2a}^1\frac{\hat{\beta}}{\sqrt{1-\hat{\beta}^2 t}}\di W_t-\frac{1}{2}\int_{2a}^1\frac{\hat{\beta}^2}{1-\hat{\beta}^2 t}\di t\right),
\end{split}
\end{equation*}
which has the same distribution as $\bZ_{\hat{\beta}}$ in \eqref{conv2}.

\vspace{0.15cm}
\noindent\textbf{(b) $\bf d=2, a\in[\frac{1}{2},1]$.} Recall that $\beta_N=\hat{\beta}\sqrt{\frac{\pi}{\log N}}$. 
Similar to \eqref{compare}, we have
\begin{equation*}
\bbE\big[\beta_N^{2k}( Z_{N,k}-\hat{Z}_{N,k})^2\big]=\bbE\big[(\beta_N^k\hat{Z}_{N,k})^2\big]-\bbE\big[(\beta_N^k Z_{N,k})^2\big]-2\beta_N^{2k}\bbE\big[(\hat{Z}_{N,k}-Z_{N,k})Z_{N,k}\big].
\end{equation*}
Reasoning as \eqref{compare}, the first two second moments on the right-hand side both converge to $\hat{\beta}^{2k}$ as $N\to\infty$. For the last term, note that 
\begin{equation*}
\hat{Z}_{N,k}-Z_{N,k}=\sum\limits_{1\leq n_1<\cdots<n_k\leq N}\sum\limits_{(x_1,\cdots,x_k)\notin B(RN^a)^{\bigotimes k}}\prod_{j=1}^k p_{n_j-n_{j-1}}(x_j-x_{j-1})\xi_{n_j,x_j}.
\end{equation*}
By the independence of $\xi_{n,x}$, this expectation is $0$. Recall \eqref{hatZ} and note that $\hat{Z}_{N,k}$ is the $k$-th term in the polynomial chaos expansion for the partition function of the classic $(1+2)$-dimensional directed polymer model. Then the result follows by \cite{CSZ17b}.

\vspace{0.15cm}
\noindent\textbf{(c) $\bf d=2, a\in(0,\frac{1}{2})$}. 
In this case, we show that $\|\beta_N^k(Z_{N,k}-\hat{Z}_{N^{2a},k})\|_2\overset{N\to\infty}{\longrightarrow}0$, where we now have $\beta_N=\hat{\beta}\sqrt{\frac{\pi}{2a\log N}}$. We write
\begin{equation*}
\bbE\big[\beta_N^{2k}( Z_{N,k}-\hat{Z}_{N^{2a},k})^2\big]=\bbE\big[(\beta_N^k\hat{Z}_{N^{2a},k})^2\big]-\bbE\big[(\beta_N^k Z_{N,k})^2\big]-2\beta_N^{2k}\bbE\big[(\hat{Z}_{N^{2a},k}-Z_{N,k})Z_{N,k}\big].
\end{equation*}
Reasoning as the case (b) above, the first two terms both go to $\hat{\beta}^{2k}$ as $N\to\infty$. However, the last term is not $0$ since $2a<1$ and thus the summands in $Z_{N,k}$ are no longer a subset of those in $\hat{Z}_{N^{2a},k}$. By independence of $\xi_{n,x}$, we have that
\begin{equation*}
\bbE\big[(\hat{Z}_{N^{2a},k}-Z_{N,k})Z_{N,k}\big]=\bbE\big[(\tilde{Z}_{N
^{2a},k})^2\big]-\bbE\big[(Z_{N,k})^2\big],
\end{equation*}
where
\begin{equation*}
\bbE\big[(\tilde{Z}_{N^{2a},k})^2\big]\sim\sum\limits_{1\leq n_1<\cdots<n_k\leq N^{2a}}\sum\limits_{(x_1,\cdots,x_k)\in B(RN^a)^{\bigotimes k}}\prod\limits_{j=1}^{k}p_{n_j-n_{j-1}}^2(x_j-x_{j-1}).
\end{equation*}
Again, by the argument in the proof of Lemma \ref{L:I_N}, and the techniques that enlarging $1\leq n_1<\cdots<n_k\leq N^{2a}$ to $1\leq n_1-n_0,\cdots,n_k-n_{k-1}\leq N^{2a}$ and shrinking it to $1\leq n_1-n_0,\cdots,n_k-n_{k-1}\leq N^{2a}/k$, we have that $\bbE[(\tilde{Z}_{N^{2a},k})^2]\sim(2a\log N/\pi)^k$ and thus the last term converges to $0$. Recall the definition of $\hat{Z}_{N^{2a},k}$ from \eqref{hatZ} and we conclude the last the case.

\vspace{0.15cm}
\noindent{\bf (ii) Case $\bf\hat{\beta}\geq1$.} The proof is identical to that in \cite{CSZ17b}. We sketch it for completeness.

It is enough to show that for any $\theta\in(0,1)$, $\bbE[(Z_N)^\theta]$ converges to $0$. We have that
\begin{equation*}
\frac{\dd}{\dd\beta}\bbE\left[(Z_{N,\beta}^{\omega})^\theta\right]=\theta\sum\limits_{n=1}^{N}\bE\bigg[\mathbbm{1}_{\{|S_n|\leq RN^a\}}\bbE_S\big[(\omega_{n,S_n}-\lambda'(\beta))(Z_{N,\beta}^{\omega})^{\theta-1}\big]\bigg],
\end{equation*}
where $\bbP_S$ is a probability measure with $\bbE_S[X]=\bbE[\exp(\sum_{n=1}^N(\beta\omega_{n,S_n}-\lambda(\beta)))X]$. Then by the FKG inequality and using $\bbE_S[\omega_{n,S_n}-\lambda'(\beta)]=0$, we get that
\begin{equation*}
\frac{\dd}{\dd\beta}\bbE\left[(Z_{N,\beta}^{\omega})^\theta\right]\leq\theta\sum\limits_{n=1}^N\bE\bigg[\mathbbm{1}_{\{|S_n|\leq RN^a\}}\bbE_S\big[(\omega_{n,S_n}-\lambda'(\beta))\big]\bbE_S\left[(Z_{N,\beta}^{\omega})^{\theta-1}\right]\bigg]=0.
\end{equation*}
Thus, for any $\theta\in(0,1)$, $\bbE[(Z_{N,\beta}^{\omega})^\theta]$ is non-increasing in $\beta$. Then for any $\hat{\beta}'\geq1>\hat{\beta}$,
\begin{equation*}
\limsup\limits_{N\to\infty}\bbE\left[\big(Z_{N,\beta'_N}^{\omega}\big)^\theta\right]\leq\limsup\limits_{N\to\infty}\bbE\left[\big(Z_{N,\beta_N}^{\omega}\big)^\theta\right]=\bbE\left[(\bZ_{\hat{\beta}})^\theta\right]=(1-\hat{\beta}^2)^{\frac{\theta(1-\theta)}{2}},
\end{equation*}
where the equality is due to Theorem \ref{T1} for $\hat{\beta}<1$. Let $\hat{\beta}\uparrow1$ and the proof is completed.

\subsection{Remarks for the result}
Finally, we make the following remarks for our results.
\begin{remark}\label{r:region}
We could add some slowly varying function in the growth rate of the width of the environment region. However, the results do not change when the parameters of the system are not critical, namely, the system is still disorder relevant if $d=1, a\in(\frac{1}{2},1]$, marginally relevant if $d=1,a\in[0,\frac{1}{2})$ and $d=2, a\in(0,1]$ and disorder irrelevant if $d\geq3, a\in[0,1]$. The situations at the critical points are more subtle. Generally speaking, if the slowly function $L(\cdot)$ oscillates between $0$ and $+\infty$, then the weak coupling limit is hard to expect.

To move a bit further, if $d=1, a=\frac{1}{2}$, and the width of the environment region is of order $\sqrt{N}/L(N)$, then the model should be disorder relevant if $L(\cdot)$ is bounded above while marginally relevant if $L(N)$ is increasing to $+\infty$, since it can be shown that the intersection time for the latter case is $I_N\sim4R\log L(N)\sqrt{N}/L(N)$. If $d=2, a=0$, and the width of the environment region is of order $L(N)$ with $L(N)$ increasing to $+\infty$, then the model should also be marginally relevant, since $I_N\sim2\log L(n)/\pi$. We choose to treat pure power law in this paper to avoid lengthy notations and tedious computations.
\end{remark}

\begin{remark}\label{r:cone}
As we mentioned in Subsection \ref{S14}, one may consider the model whose disorders are placed in the cone $\Omega_N:=\{(n,x):~1\leq n\leq N, |x|\leq Rn^a\}$. In this case, one can show that, as Lemma \ref{L:I_N},
\begin{equation*}
I_{N}\sim\begin{cases}
C_{1,a,R}\sqrt{N},\quad&\mbox{for}~d=1,a\in[\frac{1}{2},1], R>0,\\[5pt]
\frac{2R}{a\pi}N^{a},\quad&\mbox{for}~d=1,a\in(0,\frac{1}{2}), R>0,\\[5pt]
\frac{2\lfloor R\rfloor+1}{\pi}\log N,\quad&\mbox{for}~d=1, a=0, R\geq0,\\[5pt]
C_{2,a,R}\log N,\quad&\mbox{for}~d=2, a\in[\frac{1}{2},1], R>0,\\[5pt]
\sum\limits_{n=1}^{\infty}\bP^{\bigotimes2}(S_n=S'_n,|S_n|\leq Rn^a),\quad&\mbox{for}~d=2, a\in[0,\frac12),R\geq0~\mbox{or}~d\geq3, a\in[0,1], R\geq0,\\[5pt]
\end{cases}
\end{equation*}
where
\begin{equation*}
C_{1,a,R}=\begin{cases}
\frac{2}{\sqrt{\pi}},~&\text{for}~a\in(\frac{1}{2},1],\\[5pt]
\frac{2}{\pi}\int_{|x|\leq R}e^{-x^2}\di x,~&\text{for}~a=\frac{1}{2},
\end{cases}
~~\text{and}~~ C_{2,a,R}=\begin{cases}
\frac{1}{\pi},~&\text{for}~a\in(\frac{1}{2},1],\\[5pt]
\frac{1}{\pi}(1-e^{-2R^2}),~&\text{for}~a=\frac{1}{2}.
\end{cases}
\end{equation*}

It is not hard to deduce that the system is disorder relevant for $d=1, a\in[\frac{1}{2},1]$, marginally relevant for $d=1, a=0$ and $d=2, a\in[\frac{1}{2},1]$, and disorder irrelevant for $d=2, a\in[0,\frac{1}{2})$ and $d\geq3, a\in[0,1]$. However, the case $d=1, a\in(0,\frac{1}{2})$ is much more sophisticated.

The reason is that in this case, the second moment calculation is too rough to determine the phase of the system. To be specific, when $d=1, a\in(0,\frac{1}{2})$, the main contribution to the $k$-th order term of the partition function comes from
\begin{equation*}
\sum\limits_{1\leq n_1<\cdots<n_k\leq N\atop\forall j, n_j-n_{j-1}\geq\epsilon N}\sum\limits_{\forall j, |x_j|\leq Rn_j^a}\prod\limits_{j=1}^k p_{n_j-n_{j-1}}(x_j-x_{j-1})\xi_{n_j,x_j},
\end{equation*}
which, when multiplied by $(a\pi/2R N^a)^k$, converges to
\begin{equation*}
\int_{0<t_1<\cdots<t_k<1\atop\forall j, t_j-t_{j-1}>\epsilon}\prod\limits_{j=1}^{k}\frac{t_j^a}{t_j-t_{j-1}}\di t_j,
\end{equation*}
as $N\to\infty$. Note that for this integral, it is only valid to send $\epsilon\to0$ for $k=1$. Thus, the coupling $\beta_N:=\hat{\beta}a\pi/2R N^a$ is too strong.

To explain this phenomenon, considering the first return to the environment region, if it happens after time $O(N)$, then for the rest of time, the width of the environment region is of order $N^a$, which implies that the model in cone environments is comparable to the model in tube environments, and thus there should be some logarithmic correction for the second moment. However, before the first return to the cone, the width of the cone is too thin (sub-diffusive), and therefore the logarithmic term disappears in the first term of the chaos expansion.

One reasonable conjecture for $d=1, a\in(0,\frac{1}{2})$ should be the following: let $\beta_N:=\hat{\beta}a\pi/2R N^a\log N$, then $\log N(Z_N-1)$ converges in distribution to $\bZ_{\hat{\beta}}-1$ with $\bZ_{\hat{\beta}}$ in \eqref{conv2} for $\hat{\beta}\in(0,1)$, while $0$ for $\hat{\beta}\geq1$. This conjecture may be proved by an adaption of the method in \cite{CSZ17b}. To keep the paper in a reasonable length, we treat the model in tube environments, which is technically simpler to illustrate the phase transition from the pinning model to the classic directed polymer model more transparently.
\end{remark}

\subsection*{Acknowledgements} Ran Wei is supported by a public grant overseen by the French National Research Agency, ANR SWiWS (ANR-17-CE40-0032-02). 
Jinjiong Yu is supported by NSFC 12101238.
We would like to thank Rongfeng Sun and Quentin Berger for helpful suggestions and discussions. We also thank Francesco Caravenna and Nikos Zygouras for telling us some references. Finally, we thank the anonymous referees, who help us greatly improve the quality of the paper and correct some mistakes.

\bibliographystyle{plain}
\bibliography{references}
\end{document}